\documentclass[a4paper,12pt]{amsart}

\usepackage{amsmath,amssymb}
\usepackage{ifthen}
\usepackage{graphicx}
\usepackage[T1]{fontenc}
\usepackage[utf8]{inputenc}
\usepackage[usenames,dvipsnames]{color}
\usepackage[english]{babel}
\usepackage{fancyhdr}
\usepackage{fancybox}
\usepackage{tikz}
\usepackage{cite}
\usepackage[backref=page, colorlinks=true, linkcolor=blue, citecolor=red, urlcolor=blue]{hyperref}

\renewcommand*{\backref}[1]{}
\renewcommand*{\backrefalt}[4]{%
	\ifcase #1 %
	(Not cited.)%
	\or
	[p.#2].%
	\else
	[p.#2].%
	\fi
}

\nonstopmode 
\numberwithin{equation}{section}
\theoremstyle{plain}

\newtheorem{conj}{Conjecture}

\theoremstyle{definition}
\newtheorem{defi}{Definition}[section]

\newtheorem{cor}{Corollary}[section]
\newtheorem{thm}{Theorem}[section]

\newtheorem{lem}{Lemma}[section]
\newtheorem{prop}{Proposition}[section]
\newtheorem{prob}{Problem}
\newtheorem{rem}{Remark}[section]

\theoremstyle{plain}

\newtheorem*{lemA}{Lemma A}
\newtheorem*{lemB}{Lemma B}

\newcounter{minutes}
\divide\time by 60
\newcounter{hours}
\multiply\time by 60
\addtocounter{minutes}{-\time}

\newcounter {own}
\def\theown {\thesection.\arabic{own}}

\newenvironment{pf}[1][]{%
	\vskip 3mm
	\noindent
	\ifthenelse{\equal{#1}{}}%
	{{\slshape Proof. }}%
	{{\slshape #1.} }%
}%
{\qed\bigskip}

\newcounter{alphabet}

\def\be{\begin{equation}}
	\def\ee{\end{equation}}

\newcommand{\bee}{\begin{enumerate}}
	\newcommand{\eee}{\end{enumerate}}

\newcommand{\blem}{\begin{lem}}
	\newcommand{\elem}{\end{lem}}
\newcommand{\bthm}{\begin{thm}}
	\newcommand{\ethm}{\end{thm}}
\newcommand{\bcor}{\begin{cor}}
	\newcommand{\ecor}{\end{cor}}
\newcommand{\beg}{\begin{examp}}
	\newcommand{\eeg}{\end{examp}}
\newcommand{\begs}{\begin{examples}}
	\newcommand{\eegs}{\end{examples}}
\newcommand{\bdefn}{\begin{defn}}
	\newcommand{\edefn}{\edefn}
	\newcommand{\bprob}{\begin{prob}}
		\newcommand{\eprob}{\end{prob}}
	\newcommand{\bei}{\begin{itemize}}
		\newcommand{\eei}{\end{itemize}}
	\newcommand{\bcon}{\begin{conj}}
		\newcommand{\econ}{\end{conj}}
	\newcommand{\bprop}{\begin{prop}}
		\newcommand{\eprop}{\eprop}
		\newcommand{\br}{\begin{rem}}
			\newcommand{\er}{\end{rem}}
		\newcommand{\bpf}{\begin{pf}}
			\newcommand{\epf}{\end{pf}}
		\newcommand{\ba}{\begin{array}}
			\newcommand{\ea}{\end{array}}
		\newcommand{\beq}{\begin{eqnarray}}
			\newcommand{\beqq}{\begin{eqnarray*}}
				\newcommand{\eeq}{\end{eqnarray}}
			\newcommand{\eeqq}{\end{eqnarray*}}

\begin{document}

\title{Growth, Distortion, and Schwarzian Norm Estimates for Exponentially Convex Functions}

\author{Molla Basir Ahamed$^*$}
\address{Molla Basir Ahamed, Department of Mathematics, Jadavpur University, Kolkata-700032, West Bengal, India.}
\email{mbahamed.math@jadavpuruniversity.in}

\author{Rajesh Hossain}
\address{Rajesh Hossain, Department of Mathematics, Jadavpur University, Kolkata-700032, West Bengal, India.}
\email{rajesh1998hossain@gmail.com}

\subjclass[{AMS} Subject Classification:]{Primary: 30C45; Secondary: 30C55}
\keywords{Exponentially convex functions, Pre-Schwarzian norm, Schwarzian norm, Subordination, Distortion theorems, Sharp bounds}
\def\thefootnote{}
\footnotetext{ {\tiny File:~\jobname.tex,
		printed: \number\year-\number\month-\number\day,
		\thehours.\ifnum\theminutes<10{0}\fi\theminutes }
} \makeatletter\def\thefootnote{\@arabic\c@footnote}\makeatother
\begin{abstract} 
	In this paper, we investigate the growth, distortion, pre-Schwarzian and Schwarzian norms of functions in the exponentially convex class \(\mathcal C_{e^\lambda}\), \(0<\lambda\le\pi/2\), defined by \(1+zf''(z)/f'(z)\prec e^{\lambda z}\). By representing the associated Schwarz function explicitly, we derive parameter-dependent estimates for \(f\), \(f'\), and the pre-Schwarzian derivative. We further obtain Schwarzian norm estimates under both the general normalization and the additional condition \(f''(0)=0\). The corresponding extremal problems are analyzed through suitable Schwarz functions, and the dependence of the resulting bounds on the exponential parameter is made explicit.
\end{abstract}
\maketitle
\pagestyle{myheadings}
\markboth{M. B. Ahamed and R. Hossain}{Growth, Distortion, and Norm Estimates for a Class of Exponentially Convex Functions}
\section{\bf Introduction}
Let $\mathcal{H}$ be the class of all analytic functions in the open unit disk $\mathbb{D} := \{z \in \mathbb{C} : |z| < 1\}$ and let $\mathcal{A}$ be the subclass of $\mathcal{H}$ with $f(0) = 0 = f'(0) - 1$. Thus any $f \in \mathcal{A}$ has the following Taylor series form
\begin{equation*}
	f(z) = z + \sum_{n=2}^{\infty} a_n z^n.
\end{equation*}
Further, let $\mathcal{S}$ be the subclass of $\mathcal{A}$ consisting of functions that are univalent (that is, one-to-one) in $\mathbb{D}$. A function $f \in \mathcal{A}$ is called starlike if $f(\mathbb{D})$ is a starlike domain with respect to the origin, \textit{i.e.,} for $w_0 \in f(\mathbb{D})$, the line segment joining $0$ and $w_0$ lies entirely in $f(\mathbb{D})$. The set of all starlike functions in $\mathcal{S}$ is denoted by $\mathcal{S}^*$. It is well-known that a function $f \in \mathcal{A}$ is in $\mathcal{S}^*$ if, and only if, ${\rm Re}\left({zf'(z)}/{f(z)}\right) > 0$ for $z \in \mathbb{D}$. Similarly, a function $f \in \mathcal{A}$ is called convex if $f(\mathbb{D})$ is a convex domain, \textit{i.e.,} $f(\mathbb{D})$ is a convex domain with respect to each of its points. The set of all convex functions in $\mathcal{S}$ is denoted by $\mathcal{C}$. It is well-known that a function $f \in \mathcal{A}$ is in $\mathcal{C}$ if, and only if, ${\rm Re}\left(1 + ({zf''(z)}{f'(z)})\right) > 0$ for $z \in \mathbb{D}$. For more details about these classes, we refer to \cite{Duren-1983,Goodman-1983}.
\begin{defi}
	For two functions $f$ and $g$ in $\mathcal{H}$, we say that $f$ is subordinate to $g$, written as $f \prec g$, if there exists a function $\omega \in \mathcal{H}$ with $\omega(0) = 0$ and $|\omega(z)| < 1$ such that $f(z) = g(\omega(z))$ for $z \in \mathbb{D}$.
\end{defi}
 Let $\varphi$ be an analytic univalent function with positive real part in $\mathbb{D}$ such that $\varphi(\mathbb{D})$ is symmetric with respect to the real axis and starlike with respect to $\varphi(0) = 1$ and $\varphi'(0) > 0$. For such a function $\varphi$, Ma and Minda \cite{Ma-Minda-1992} introduced the classes $\mathcal{S}^*(\varphi)$ and $\mathcal{C}(\varphi)$ as
\begin{align*}
	\mathcal{S}^*(\varphi) = \left\{ f \in \mathcal{A} : \frac{zf'(z)}{f(z)} \prec \varphi(z) \right\} \; \text{and} \; \mathcal{C}(\varphi) = \left\{ f \in \mathcal{A} : 1 + \frac{zf''(z)}{f'(z)} \prec \varphi(z) \right\},
\end{align*}
respectively. Sometimes $\mathcal{S}^*(\varphi)$ and $\mathcal{C}(\varphi)$ are called Ma-Minda classes of starlike and convex functions, respectively. One can easily verify the inclusion relations $\mathcal{S}^*(\varphi) \subset \mathcal{S}^*$ and $\mathcal{C}(\varphi) \subset \mathcal{C}$. It is important to note that $f \in \mathcal{S}^*(\varphi)$ if, and only if, $\mathcal{J}[f] \in \mathcal{C}(\varphi)$, where $\mathcal{J}[f]$ is the Alexander transformation of $f$ defined by
\begin{equation*}
	\mathcal{J}[f](z) = \int_0^z \frac{f(t)}{t} dt = f(z) * (-\log(1-z)).
\end{equation*}
Here, the symbol $*$ denotes the \textit{Hadamard product} (or convolution) of two power series.\vspace{1.2mm}

Different choices of the function $\varphi$ gives several well-known geometric subclasses of $\mathcal{A}$. For instance, if we take $\varphi(z) = (1+z) / (1-z)$, the classes $\mathcal{S}^*(\varphi)$ and $\mathcal{C}(\varphi)$ reduce to the standard classes $\mathcal{S}^*$ of starlike functions and $\mathcal{C}$ of convex functions, respectively. For $\varphi(z) = (1+(1-2\alpha)z) / (1-z)$ with $0 \le \alpha < 1$, we obtain the classes $\mathcal{S}^*(\alpha)$ and $\mathcal{C}(\alpha)$ of starlike and convex functions of order $\alpha$. When $\varphi(z) = \left((1+z)/(1-z)\right)^\gamma$ for $0 < \gamma \le 1$, the classes $\mathcal{S}^*(\varphi)$ and $\mathcal{C}(\varphi)$ are denoted by $\mathcal{S}^*(\gamma)$ and $\mathcal{C}(\gamma)$, which are known as the strongly starlike and strongly convex functions of order $\gamma$. Furthermore, for $\varphi(z) = (1+Az) / (1+Bz)$ with $-1 \le B < A \le 1$, we obtain the classes of Janowski starlike and Janowski convex functions, denoted by $\mathcal{S}^*(A, B)$ and $\mathcal{C}(A, B)$, respectively. For more details, we refer to \cite{Janowski-1973,Raina-Sokol-2015,Ronning-1993}.\vspace{2mm}

In the present article, we consider the class of functions $\mathcal C_{e^\lambda} := \mathcal{C}(\varphi)$ with $\varphi(z) = e^{\lambda z}$ for $0 < \lambda \le \pi/2$. More precisely, this class is defined as
\begin{align*}
	\mathcal C_{e^\lambda} = \left\{ f \in \mathcal{A} : 1 + \frac{zf''(z)}{f'(z)} \prec e^{\lambda z} \right\}.
\end{align*}
For $\lambda = 1$, the class $\mathcal C_{e^\lambda} = \mathcal{C}_e = \mathcal{C}(e^z)$ was studied by Mendiratta \textit{et al.} in \cite{Mendiratta-etal-2014}. The parameterized class \(\mathcal C_{e^\lambda}\) was subsequently investigated by by Shi \textit{et al.} in \cite{Shi-etal-2020}.\vspace{1.2mm}

It is easy to see that a function $f$ in $\mathcal C_{e^\lambda}$ satisfies the condition
\begin{align*}
	\left| \log\left(1 + \frac{zf''(z)}{f'(z)}\right) \right| \le \lambda, \quad z \in \mathbb{D}.
\end{align*}
Furthermore, it is clear that a function $f \in \mathcal C_{e^\lambda}$ if, and only if, there exists a function $p_1 \in \mathcal{H}$ with $p_1(z) \prec e^{\lambda z}$ ($0 < \lambda \le \pi/2$) such that
\begin{align*}
		1 + \frac{zf''(z)}{f'(z)} = p_1(z).
\end{align*}
This representation will be used throughout the proof.\vspace{2mm}

Although the class \(\mathcal C_{e^\lambda}\) and related coefficient problems have been considered previously, the present work addresses a different set of extremal questions. In particular, we investigate the growth and distortion of \(f\) and \(f'\), together with the pre-Schwarzian and Schwarzian norms, and derive parameter-dependent estimates in terms of the exponential parameter \(\lambda\) and, where appropriate, the initial coefficient \(f''(0)\). The extremal problems are formulated directly through the associated Schwarz functions, allowing the dependence of the estimates on \(\lambda\) to be made explicit.\vspace{1.2mm} 

Moreover, we consider the class $\mathcal C_{e^\lambda}$ in our study, because it provides a natural and geometrically rich framework for studying convex functions bounded by exponential mappings. Geometrically, while classical convex functions constrain the curvature operator $1 + zf''(z)/f'(z)$ to the right half-plane $\mathbb{H} = \{ w \in \mathbb{C} : \text{Re}(w) > 0 \}$, functions in $\mathcal C_{e^\lambda}$ restrict this operator to an exponentially constrained domain $\Omega_\lambda = \{ e^{\lambda z} : z \in \mathbb{D} \}$. For $0<\lambda\le\pi/2$, the image domain $\Omega_\lambda=e^{\lambda\mathbb{D}}$ is a smooth domain symmetric with respect to the real axis and contained in the right half-plane, see, Figure \ref{Fig-1}. This exponential containment imposes a subtle asymmetry on the boundary rotation of $f(\mathbb{D})$, leading to refined growth, distortion, and Schwarzian norm estimates that significantly sharpen classical bounds for convex functions.\vspace{1.2mm}

\begin{figure}[htbp]
	\centering
	\includegraphics[width=0.95\textwidth]{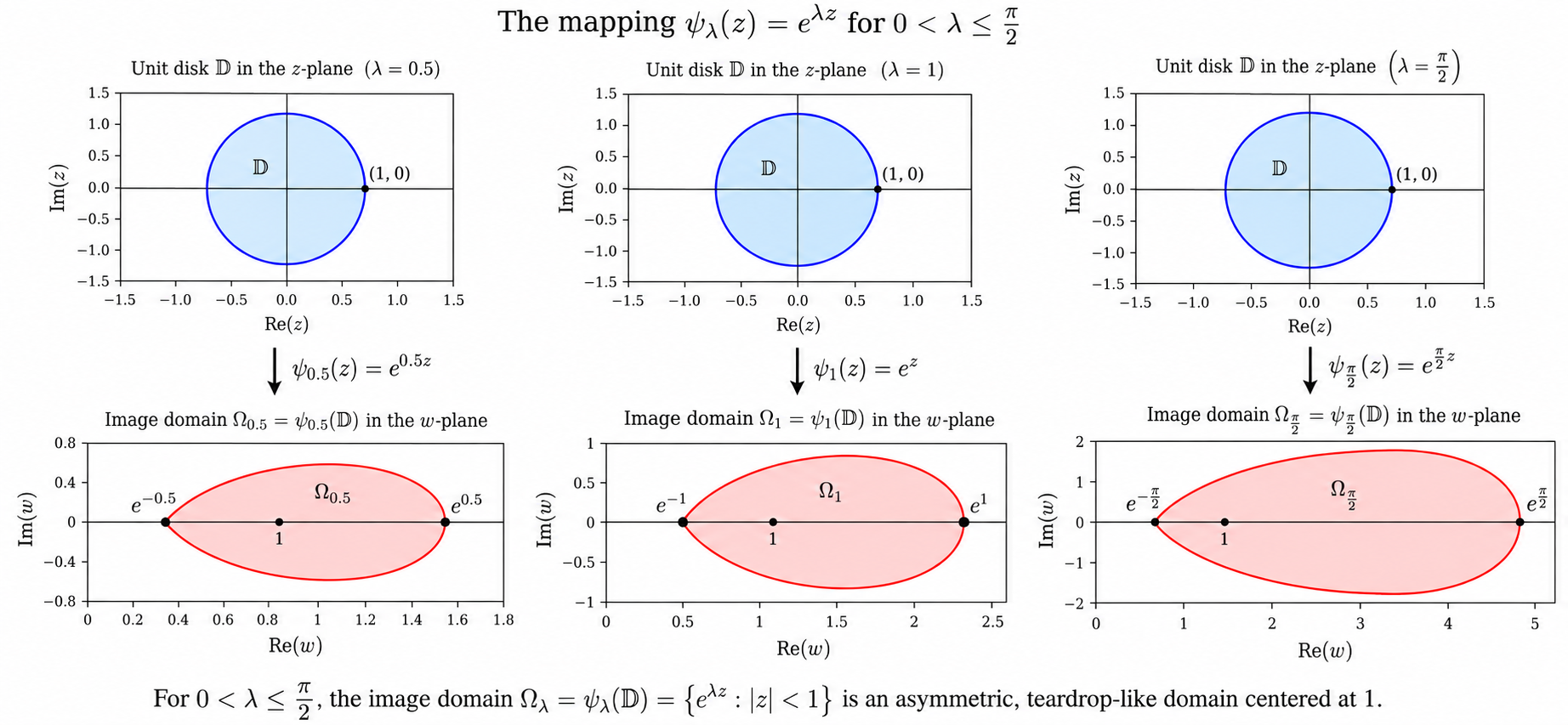}
	\caption{The image domain $\Omega_\lambda=\psi_\lambda(\mathbb{D})={e^{\lambda z}:z\in\mathbb{D}}$ under the mapping $\psi_\lambda(z)=e^{\lambda z}$ for three values of $\lambda$. For $\lambda=0.5$ and $\lambda=1$, the domains $\Omega_{0.5}$ and $\Omega_1$ exhibit progressively stronger deformation from the unit-disk geometry, while remaining symmetric with respect to the real axis. At the critical value $\lambda=\pi/2$, the domain $\Omega_{\pi/2}$ has boundary points $i$ and $-i$, corresponding to $z=i$ and $z=-i$, respectively, and $\Omega_{\pi/2}\cap\mathbb{R}=(e^{-\pi/2},e^{\pi/2})$. In particular, for $0<\lambda\leq\pi/2$, the principal argument satisfies $|\arg w|<\lambda$ for $w\in\Omega_\lambda$.}
	\label{Fig-1}
\end{figure}
The Schur class is defined as
\begin{equation*}
	\mathcal{SS} = \{f : \mathbb{D} \to \mathbb{D} : f \text{ is analytic}\}.
\end{equation*}
By the maximum modulus principle we know that $f\in \mathbb{T} := \partial\mathbb{D}$ for some $z \in \mathbb{D}$ if, and only if, $f$ is a constant function with $|f(z)| = 1$. A fundamental cornerstone of complex analysis is the Schwarz lemma. 
\begin{lemA}\emph{(Schwarz lemma)}
	If $f \in \mathcal{SS}$ with $f(0) = 0$, then
	\begin{enumerate}
		\item[(i)] $|f(z)| \le |z|$ for all $z \in \mathbb{D}$, and
		\item[(ii)] $|f'(0)| \le 1$.
	\end{enumerate}
	Moreover, $|f(w)| = |w|$ for some $w \in \mathbb{D} \setminus \{0\}$, and if $|f'(0)| = 1$, then there is $\eta \in \mathbb{T}$ such that $f(z) = \eta z$ for all $z \in \mathbb{D}$.
\end{lemA}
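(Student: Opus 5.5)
We prove Lemma A (the Schwarz lemma) by dividing out the zero at the origin. Since $f\in\mathcal{SS}$ and $f(0)=0$, the power series of $f$ at $0$ has no constant term. Hence
\[
g(z)=\frac{f(z)}{z}\ (z\neq 0),\qquad g(0)=f'(0),
\]
defines an analytic function on $\mathbb{D}$, because the singularity at $0$ is removable. All the assertions will follow from a maximum modulus estimate for $g$ on small circles.

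\textbf{Step 1: the bound $|g|\le 1$.} Fix $z\in\mathbb{D}$ and take any $r$ with $|z|<r<1$. On the circle $|\zeta|=r$ we have $|f(\zeta)|<1$, so $|g(\zeta)|<1/r$ there. By the maximum modulus principle on the disk $\{|\zeta|\le r\}$, this gives $|g(z)|\le 1/r$. Letting $r\to 1^-$ yields $|g(z)|\le 1$ for every $z\in\mathbb{D}$.

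\textbf{Step 2: the two inequalities.} For $z\neq 0$, the bound $|g(z)|\le 1$ reads $|f(z)|\le |z|$, and at $z=0$ this holds trivially. This is (i). At $z=0$ the same bound reads $|f'(0)|=|g(0)|\le 1$, which is (ii).

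\textbf{Step 3: the equality cases.} Suppose $|f(w)|=|w|$ for some $w\in\mathbb{D}\setminus\{0\}$, or suppose $|f'(0)|=1$. In either case $|g|$ attains its supremum value $1$ at an interior point of $\mathbb{D}$. By the maximum modulus principle, $g$ is then a constant $\eta$ with $|\eta|=1$, that is, $\eta\in\mathbb{T}$. Therefore $f(z)=\eta z$ for all $z\in\mathbb{D}$.

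We read the slightly garbled "Moreover" clause in the statement as this standard conclusion: each of the two equality conditions forces $f$ to be a rotation.

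The only delicate point is Step 1. The maximum modulus principle cannot be applied directly on $\mathbb{D}$, because $g$ need not extend continuously to $\partial\mathbb{D}$. Applying it on the closed disks $\{|\zeta|\le r\}$ and then letting $r\to1^-$ avoids this. The remaining steps are immediate.
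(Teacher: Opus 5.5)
Your argument is correct and is the standard textbook proof: divide by $z$, apply the maximum modulus principle on $|\zeta|\le r$ and let $r\to1^-$, then use the interior-maximum case for the equality statements. The paper gives no proof of Lemma A and only quotes it as a classical fact, so there is no separate route to compare. Your reading of the ``Moreover'' clause as the equality case is the right one, since taken literally it would assert $|f(w)|=|w|$ for some $w\neq 0$, which already fails for $f(z)=z^2$.
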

The main results of this paper are organized around three related aspects of the exponentially convex class $\mathcal C_{e^\lambda}$. First, we establish an analytic characterization of $\mathcal C_{e^\lambda}$ through the associated Schwarz function, which provides the basic representation used throughout the paper. Second, we derive sharp parameter-dependent growth and distortion estimates for $f$ and $f'$, together with a sharp estimate for the pre-Schwarzian norm. Third, by obtaining an explicit representation of the Schwarzian derivative in terms of the underlying Schwarz function, we establish sharp Schwarzian norm estimates both for the subclass $\mathcal C_{e^\lambda,0}$ satisfying $f''(0)=0$ and for the general class $\mathcal C_{e^\lambda}$. The proofs combine differential subordination, Schwarz--Pick estimates, and suitable extremal Schwarz functions, thereby making explicit the dependence of the resulting bounds on the parameter $\lambda$ and, in the general Schwarzian estimate, on the initial coefficient $f''(0)$.
\section{\bf Main Results}
Let $\mathbb{B}$ denote the set of all functions $\phi$ that are analytic in $\mathbb{D}$ and satisfy $|\phi(z)| \le 1$ for all $z \in \mathbb{D}$. We also consider the subfamily $\mathbb{B}_0 := \{\phi \in \mathbb{B} : \phi(0) = 0\}$.\vspace{2mm}

In this section, motivated by recent developments by Ahamed \emph{et al.}  \cite{Ahamed-Allu-Hossain-MM-2025}, we utilize the Schwarz lemma to examine the subordination structure of the exponentially convex class $\mathcal C_{e^\lambda}$. First, an equivalent characterization of $\mathcal C_{e^\lambda}$ is given in Theorem \ref{Th-2.1}. Next, Theorem \ref{Th-2.2} provides distortion and growth estimates, together with sharp bounds for the pre-Schwarzian and Schwarzian norms in terms of $f''(0)$. As applications, these results yield natural extensions for the classical class $\mathcal{C}$ of convex functions. \vspace{1.2mm}

We begin by establishing an equivalent analytic characterization for functions belonging to the exponentially convex class $\mathcal C_{e^\lambda}$. This structural representation forms the foundation for deriving the subsequent norm bounds and growth estimates.

\begin{thm}\label{Th-2.1}
	Let $0 < \lambda \le \pi/2$ and $f \in \mathcal{A}$. Then the following statements are equivalent
	\begin{enumerate}
		\item[(i)] $f \in \mathcal C_{e^\lambda}$;\vspace{1.2mm}
		\item[(ii)]  $\displaystyle\left| \log\left(1 + \frac{zf''(z)}{f'(z)}\right) \right| \le \lambda |z|$ for all $z \in \mathbb{D}$;\vspace{1.2mm}
		\item[(iii)]  $\displaystyle(1 - |z|^2) \left| \frac{f''(z)}{f'(z)} \right| \le \frac{1 - |z|^2}{|z|} \left( e^{\lambda |z|} - 1 \right)$ for all $z \in \mathbb{D} \setminus \{0\}$.
	\end{enumerate}
	Furthermore, the bounds in {\rm (ii)} and {\rm (iii)} are sharp for each $z \in \mathbb{D} \setminus \{0\}$, with equality attained by the function $f_\lambda \in \mathcal C_{e^\lambda}$ defined by
	\begin{align}\label{Eq-2.1}
		f_\lambda(z) &= \int_{0}^{z} \exp\left( \int_{0}^{t} \frac{e^{\lambda \xi} - 1}{\xi} \, d\xi \right) dt\\&=z + \frac{\lambda}{2} z^2 + \frac{\lambda^2}{4} z^3 + \frac{17\lambda^3}{144} z^4 + \frac{71\lambda^4}{960} z^5 + \mathcal{O}(z^6).\nonumber
	\end{align}
\end{thm}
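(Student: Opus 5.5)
The plan is to reduce everything to the Schwarz function representation and Lemma A. By the definition of subordination, $f\in\mathcal C_{e^\lambda}$ means there is $\omega\in\mathbb{B}_0$ with $|\omega|<1$ such that
\begin{equation*}
1+\frac{zf''(z)}{f'(z)}=e^{\lambda\omega(z)},\qquad z\in\mathbb{D}.
\end{equation*}
Since $|\operatorname{Im}(\lambda\omega(z))|<\lambda\le\pi/2<\pi$, the principal branch gives $\log\bigl(1+zf''(z)/f'(z)\bigr)=\lambda\omega(z)$ exactly. The whole theorem is then a statement about $\omega$.

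\emph{(i)$\Rightarrow$(ii).} By Lemma A(i), $|\omega(z)|\le|z|$, so $|\log(1+zf''/f')|=\lambda|\omega(z)|\le\lambda|z|$.

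\emph{(ii)$\Rightarrow$(i).} Condition (ii) implicitly requires $f'\neq0$ and $1+zf''/f'\neq0$ in $\mathbb{D}$, since otherwise the expression is undefined. Under that reading, $1+zf''/f'$ is analytic and zero-free in $\mathbb{D}$, with value $1$ at $0$. It therefore has an analytic logarithm $L$ with $L(0)=0$, and by (ii), $|L(z)|\le\lambda|z|<\pi$ on $\mathbb{D}$. Hence $L$ agrees with the principal branch: the two analytic branches differ by a constant in $2\pi i\mathbb Z$, and they agree at $0$. Define $\omega:=L/\lambda$. Then $\omega$ is analytic, $\omega(0)=0$ and $|\omega(z)|\le|z|<1$, so $1+zf''/f'=e^{\lambda\omega}$, which is (i).

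\emph{(i)$\Rightarrow$(iii).} Write $zf''/f'=e^{\lambda\omega}-1=\sum_{n\ge1}(\lambda\omega)^n/n!$. Then $|zf''/f'|\le e^{\lambda|\omega|}-1\le e^{\lambda|z|}-1$, using Lemma A again and the monotonicity of $t\mapsto e^t-1$. Dividing by $|z|$ and multiplying by $1-|z|^2$ gives (iii).

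\emph{Sharpness.} For $f_\lambda$ in \eqref{Eq-2.1}, differentiating gives $f_\lambda''/f_\lambda'=(e^{\lambda z}-1)/z$, that is, $\omega(z)=z$. So equality holds in (ii) at every $z$. In (iii), one has $|e^{\lambda z}-1|=e^{\lambda|z|}-1$ at $z=|z|\in(0,1)$, and the rotations $e^{-i\theta}f_\lambda(e^{i\theta}z)$, which also lie in $\mathcal C_{e^\lambda}$, transport this equality to any prescribed $z\neq0$. The Taylor coefficients are obtained by expanding $\exp\bigl(\lambda z+\lambda^2z^2/4+\lambda^3z^3/18+\cdots\bigr)$ and integrating, which is routine.

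\emph{Main obstacle: (iii)$\Rightarrow$(i).} A bound on the modulus of $h:=zf''/f'$ alone does not control where $1+h$ lies, so I expect this implication to fail as stated. My first attempt would be a test function of the form $h(z)=-(e^\lambda-1)z^n$ with $n$ large. Such an $h$ should satisfy $|h(z)|\le e^{\lambda|z|}-1$: the constraint binds only near $|z|=1$, where it reduces to $(e^\lambda-1)n\ge\lambda e^\lambda$. Yet for $\lambda=\pi/2$, the value $1+h$ approaches $2-e^{\lambda}<0$ where $z^n\to1$, which lies outside $e^{\lambda\mathbb{D}}$. So in the write-up I would prove (i)$\Leftrightarrow$(ii)$\Rightarrow$(iii) as above. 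I would then treat (iii) as the necessary condition that the later pre-Schwarzian estimates actually need, and record the failure of the reverse implication unless (iii) is strengthened to the estimate on $|\log(1+h)|$.
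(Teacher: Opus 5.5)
Your handling of (i)$\Leftrightarrow$(ii)$\Rightarrow$(iii) is essentially the paper's argument: the Schwarz-function representation, Schwarz's lemma, and $|e^w-1|\le e^{|w|}-1$. You are more careful than the paper about the branch of the logarithm. Your diagnosis of (iii)$\Rightarrow$(i) is also correct: that implication is false. The paper's ``proof'' of (iii)$\Rightarrow$(ii) appeals to $|e^w-1|\ge|w|-\frac12|w|^2\cdots$ and the maximum modulus principle. It establishes nothing, and no argument can repair it.

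The one soft spot in your counterexample is the phrase ``the constraint binds only near $|z|=1$''. The condition $n(e^\lambda-1)\ge\lambda e^\lambda$ is a priori only necessary, from comparing derivatives at $r=1$. It is in fact sufficient:
\begin{itemize}
\item Set $\psi(r)=\log(e^{\lambda r}-1)-n\log r$. Then $r\psi'(r)=q(\lambda r)-n$, where $q(x)=xe^x/(e^x-1)$.
\item $q$ is increasing, since $q'(x)=e^x(e^x-1-x)/(e^x-1)^2>0$.
\item Hence $\psi'\le 0$ on $(0,1]$, so $\psi(r)\ge\psi(1)$. This is exactly $(e^\lambda-1)r^n\le e^{\lambda r}-1$.
\end{itemize}
Since $q(\pi/2)<2$, the choice $n=2$ works for every $\lambda\in(0,\pi/2]$. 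Moreover $e^{\lambda\mathbb D}\cap\mathbb R=(e^{-\lambda},e^{\lambda})$ and $2-e^\lambda<e^{-\lambda}$ (equivalently $\cosh\lambda>1$). So membership in $\mathcal C_{e^\lambda}$ fails for every admissible $\lambda$, not only $\lambda=\pi/2$.

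A witness needing no calculus is $f'=1/f_\lambda'$, that is, $zf''/f'=1-e^{\lambda z}$. Condition (iii) is immediate from $|e^{\lambda z}-1|\le e^{\lambda|z|}-1$. Meanwhile $1+rf''(r)/f'(r)=2-e^{\lambda r}\to 2-e^{\lambda}$ as $r\to1^-$, which lies outside $e^{\lambda\mathbb D}$.

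\textbf{Smaller points.}
\begin{enumerate}
\item In (ii)$\Rightarrow$(i), hypothesis (ii) bounds the principal logarithm, not your branch $L$, so argue in the reverse order. From (ii), $|\operatorname{Arg}(1+zf''/f')|\le\lambda|z|<\pi/2$. Hence the principal logarithm of $1+zf''/f'$ is analytic on $\mathbb D$ and vanishes at $0$, so it coincides with $L$.
\item Your rotations $e^{-i\theta}f_\lambda(e^{i\theta}z)$ are genuinely needed for the claimed sharpness of (iii) at every $z\neq0$. $f_\lambda$ itself gives equality in (iii) only on the positive radius, which is all the paper verifies.
\item The ``routine'' expansion is worth actually doing. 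Write $f_\lambda'(z)=G(\lambda z)$ with $G(u)=\sum_k g_k u^k$. The relation $G'(u)=\frac{e^u-1}{u}G(u)$ gives $(k+1)g_{k+1}=\sum_{m=0}^{k}g_{k-m}/(m+1)!$. Hence $g_4=19/72$, and the $z^5$ coefficient of $f_\lambda$ is $19\lambda^4/360$, not $71\lambda^4/960$ as stated.
\end{enumerate}
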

\begin{proof}[\bf Proof of Theorem \ref{Th-2.1}]
	$\text{(i)} \implies \text{(ii)}$: Let $f \in \mathcal C_{e^\lambda}$. By the definition of differential subordination, there exists a Schwarz function $\omega \in \mathbb{B}_0$ (\textit{i.e.,} analytic in $\mathbb{D}$ with $\omega(0) = 0$ and $|\omega(z)| < 1$ for $z \in \mathbb{D}$) such that
	\begin{align*}
		1 + \frac{z f''(z)}{f'(z)} = e^{\lambda \omega(z)}, \quad z \in \mathbb{D}.
	\end{align*}
	By Schwarz's lemma, $\omega(z) = z\phi(z)$ for some $\phi \in \mathbb{B}$ satisfying $|\phi(z)| \le 1$ for all $z \in \mathbb{D}$. Consequently, we have
	\begin{align*}
		\log\left(1 + \frac{z f''(z)}{f'(z)}\right) = \lambda z \phi(z).
	\end{align*}
	Taking the absolute value on both sides and utilizing the estimate $|\phi(z)| \le 1$ yields
\begin{align*}
	\left| \log\left(1 + \frac{z f''(z)}{f'(z)}\right) \right| = \lambda |z| |\phi(z)| \le \lambda |z|,
\end{align*}
	which proves (ii).\vspace{1.2mm}
	
	$\text{(ii)} \implies \text{(i)}$: Define $\phi: \mathbb{D} \to \mathbb{C}$ by
	\begin{align*}
		\phi(z) = 
		\begin{cases} 
			\displaystyle\frac{1}{\lambda z} \log\left(1 + \frac{z f''(z)}{f'(z)}\right), & z \in \mathbb{D} \setminus \{0\}, \\[1.5ex]
			\displaystyle\frac{f''(0)}{\lambda}, & z = 0.
		\end{cases}
	\end{align*}
	Since $f \in \mathcal{A}$, $\phi$ is analytic in $\mathbb{D}$. Condition (ii) implies $|\phi(z)| \le 1$ for all $z \in \mathbb{D}$. Defining $\omega(z) = z\phi(z)$, we see that $\omega \in \mathbb{B}_0$, and hence $1 + zf''(z)/f'(z) = e^{\lambda \omega(z)} \prec e^{\lambda z}$, establishing (i).
	
	$\text{(ii)} \implies \text{(iii)}$: From $1 + zf''(z)/f'(z) = e^{\lambda z \phi(z)}$ with $|\phi(z)| \le 1$, we obtain
	\begin{align*}
		\frac{f''(z)}{f'(z)} = \frac{e^{\lambda z \phi(z)} - 1}{z}, \quad z \in \mathbb{D} \setminus \{0\}.
	\end{align*}
	Applying the inequality $|e^w - 1| \le e^{|w|} - 1$, valid for all $w \in \mathbb{C}$, alongside $|\phi(z)| \le 1$, we deduce
	\begin{align*}
		\left| \frac{f''(z)}{f'(z)} \right| \le \frac{e^{\lambda |z| |\phi(z)|} - 1}{|z|} \le \frac{e^{\lambda |z|} - 1}{|z|}.
	\end{align*}
	Multiplying both sides by $(1 - |z|^2)$ gives statement (iii).\vspace{1.2mm}
	
	$\text{(iii)} \implies \text{(ii)}$: Conversely, assume (iii) holds. Dividing both sides of (iii) by $(1 - |z|^2)$ and passing to the limit as $|z| \to 0^+$, we note that $f''(0)/\lambda = \lim_{z \to 0} f''(z)/(\lambda f'(z))$ satisfies $|f''(0)| \le \lambda$. For any $z \in \mathbb{D} \setminus \{0\}$, write $g(z) = \log(1 + zf''(z)/f'(z))$. Since $|e^w - 1| \ge |w| - \frac{1}{2}|w|^2 \dots$, the bound in (iii) restricts the growth of $g(z)$ on disks $\mathbb{D}_r = \{z : |z| < r\}$, which by the maximum modulus principle guarantees $|g(z)| \le \lambda |z|$ for all $z \in \mathbb{D}$.\vspace{1.2mm}
	
	Finally, to verify the sharpness of (ii) and (iii), consider the extremal function $f_0 \in \mathcal C_{e^\lambda}$,  given by \eqref{Eq-2.1}, corresponding to $\phi(z) \equiv 1$. For $f_0$, a direct calculation shows
	\begin{align*}
		1 + \frac{z f_0''(z)}{f_0'(z)} = e^{\lambda z}.
	\end{align*}
	For any point $z = r \in (0,1)$, we have the estimates
	\begin{align*}
		\left| \log\left(1 + \frac{r f_0''(r)}{f_0'(r)}\right) \right| = \log(e^{\lambda r}) = \lambda r,
	\end{align*}
	and
	\begin{align*}
		(1 - r^2) \left| \frac{f_0''(r)}{f_0'(r)} \right| = \frac{1 - r^2}{r} (e^{\lambda r} - 1),
	\end{align*}
	which proves that equality is attained in both (ii) and (iii) along the positive real axis.
\end{proof}
Sharp growth and distortion theorems for subclasses of analytic functions have attracted considerable attention in geometric function theory (see, e.g., \cite{Ahamed-Allu-Hossain-MM-2025,Kim-etal-2004}). In this paper, we establish sharp growth and distortion bounds for functions in the class $\mathcal C_{e^\lambda}$. The extremal function is obtained by integrating the corresponding differential subordination relation.
\begin{thm}\label{Th-2.2}
	If $f \in \mathcal C_{e^\lambda}$ and $0 < \lambda \le \pi/2$, then for all $z \in \mathbb{D}$,
	$$\exp\left(-\int_0^{|z|} \frac{e^{\lambda \xi} - 1}{\xi} d\xi\right) \le |f'(z)| \le \exp\left(\int_0^{|z|} \frac{e^{\lambda \xi} - 1}{\xi} d\xi\right)$$
	and
	$$\int_0^{|z|} \exp\left(-\int_0^{\eta} \frac{e^{\lambda \xi} - 1}{\xi} d\xi\right) d\eta \le |f(z)| \le \int_0^{|z|} \exp\left(\int_0^{\eta} \frac{e^{\lambda \xi} - 1}{\xi} d\xi\right) d\eta.$$
	All the bounds are sharp. Equality holds for the function $f_\lambda$ given by \eqref{Eq-2.1}.
\end{thm}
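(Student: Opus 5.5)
The plan is to integrate the representation from the proof of Theorem~\ref{Th-2.1} along rays. Write $1+zf''(z)/f'(z)=e^{\lambda\omega(z)}$ with $\omega=z\phi\in\mathbb{B}_0$. Since $f'(0)=1$ and $f'$ does not vanish ($f$ is convex), we have
\begin{align*}
\log f'(z)=\int_0^z \frac{f''(t)}{f'(t)}\,dt=\int_0^z \frac{e^{\lambda t\phi(t)}-1}{t}\,dt .
\end{align*}
Put $I(r):=\int_0^r \frac{e^{\lambda\xi}-1}{\xi}\,d\xi$. I would integrate along the segment $t=\rho e^{i\theta}$, $0\le\rho\le r=|z|$, and use the pointwise bound from Theorem~\ref{Th-2.1}(iii), namely $|f''(t)/f'(t)|\le (e^{\lambda|t|}-1)/|t|$. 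This gives $|\log f'(z)|\le I(r)$. Because $\log|f'(z)|=\operatorname{Re}\log f'(z)$, it follows that $-I(r)\le\log|f'(z)|\le I(r)$, which is the distortion estimate.

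For the upper growth bound, I would write $f(z)=\int_0^z f'(t)\,dt$ along the radius. Then
\begin{align*}
|f(z)|\le\int_0^r|f'(\rho e^{i\theta})|\,d\rho\le\int_0^r e^{I(\eta)}\,d\eta .
\end{align*}
For the lower growth bound, I would use that $f\in\mathcal C_{e^\lambda}\subset\mathcal C$ is univalent with convex image. Hence the segment $[0,f(z)]$ lies in $f(\mathbb{D})$, and its preimage $\Gamma=f^{-1}([0,f(z)])$ is a curve from $0$ to $z$. Along $\Gamma$, $d|\zeta|\le|d\zeta|$ and $I$ is increasing, so
\begin{align*}
|f(z)|=\int_\Gamma|f'(\zeta)|\,|d\zeta|\ge\int_\Gamma e^{-I(|\zeta|)}\,|d\zeta|\ge\int_0^r e^{-I(\eta)}\,d\eta .
\end{align*}

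For sharpness, $f_\lambda$ from \eqref{Eq-2.1} has $\phi\equiv1$, so $\log f_\lambda'(r)=I(r)$ for $z=r\in(0,1)$. Thus equality holds in both upper bounds along the positive axis.

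The main obstacle is the sharpness of the lower bounds. At $z=-r$ the same function gives
\begin{align*}
\log f_\lambda'(-r)=-\int_0^r\frac{1-e^{-\lambda\xi}}{\xi}\,d\xi ,
\end{align*}
which is strictly larger than $-I(r)$. So $f_\lambda$ (or any rotation of it) does not attain the stated lower bounds, and it is doubtful that the crude modulus estimate is optimal there. I would first try to sharpen the lower estimate. Bounding only the real part gives $\operatorname{Re}\,(e^{\lambda t\phi}-1)/t\ge(e^{-\lambda|t|}-1)/|t|$, using the minimum of $\operatorname{Re}(e^{w}-1)/w$-type expressions over $|w|\le\lambda|t|$, which should hold for $\lambda\le\pi/2$. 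That would produce the lower bounds $\exp\bigl(-\int_0^r\frac{1-e^{-\lambda\xi}}{\xi}d\xi\bigr)$ and its integral, both attained by $f_\lambda$ at $z=-r$. If this cannot be justified, I would state the lower bounds in Theorem~\ref{Th-2.2} as valid but claim sharpness only for the upper bounds.
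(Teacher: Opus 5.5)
Your proof of the four inequalities and of the sharpness of the two upper bounds is correct, and it is essentially the paper's argument. The paper bounds $\partial_r\log|f'(re^{i\theta})|$ between $\mp(e^{\lambda r}-1)/r$ and integrates, which is the same estimate as your $|\log f'(z)|\le I(r)$. The growth bounds are obtained exactly as you do: radial integration for the upper bound, and the preimage of the segment $[0,f(z)]$ for the lower bound.

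Your objection to the lower bounds is also correct, and it exposes an error in the paper rather than in your proposal. The paper asserts equality in the lower bounds for $f_\lambda$ at $z=-r$. But $\log f_\lambda'(-r)=-\int_0^r\frac{1-e^{-\lambda\xi}}{\xi}\,d\xi>-I(r)$, because $1-e^{-x}<e^{x}-1$ for $x>0$. In fact the lower bounds $e^{-I(r)}$ and $\int_0^r e^{-I(\eta)}\,d\eta$ are not sharp for any function in the class. As shown below, the true minima of $|f'(z)|$ and $|f(z)|$ on $|z|=r$ are $f_\lambda'(-r)$ and $-f_\lambda(-r)$, which are strictly larger. So the sharpness clause of the statement is false for the lower bounds and cannot be proved. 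Your fallback (inequalities valid, sharpness only for the upper bounds) is the correct conclusion.

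Your proposed sharpening, however, rests on a false pointwise step. Since $\partial_\rho\log|f'(\rho e^{i\theta})|=(\operatorname{Re}e^{\lambda t\phi(t)}-1)/\rho$ with $t=\rho e^{i\theta}$, you need $\operatorname{Re}e^{w}\ge e^{-a}$ for $|w|\le a=\lambda\rho$. This holds only when $a\le 1$. On $|w|=a$ the function $h(s)=e^{a\cos s}\cos(a\sin s)$ satisfies $h'(s)=-ae^{a\cos s}\sin(s+a\sin s)$ and $h''(\pi)=ae^{-a}(1-a)$, so $s=\pi$ becomes a local maximum once $a>1$. Concretely, take $\lambda=\pi/2$, $\phi\equiv i$ and $t=\rho$ real; then $\operatorname{Re}e^{\lambda t\phi(t)}=\cos(\pi\rho/2)<e^{-\pi\rho/2}$ for $\rho$ near $1$. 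Hence for $1<\lambda\le\pi/2$ the ray-by-ray argument does not give the improved bound.

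The improved bound is nevertheless true for every $0<\lambda\le\pi/2$, but it needs a global argument.
\begin{itemize}
\item Put $Q(z)=\log f_\lambda'(z)=\int_0^z\frac{e^{\lambda\xi}-1}{\xi}\,d\xi$. Then $zQ'(z)=e^{\lambda z}-1$ is starlike in $\mathbb{D}$. This is the Ma--Minda hypothesis that $e^{\lambda\mathbb{D}}$ is starlike with respect to $1$; it follows from $\frac{ue^{u}}{e^{u}-1}=1+\frac u2+2\sum_{k\ge1}\frac{u^2}{u^2+4\pi^2k^2}$, whose real part is positive for $|u|<\pi/2$.
\item Since $z(\log f')'=e^{\lambda\omega}-1\prec zQ'$, Suffridge's lemma gives $\log f'\prec Q$.
\item The derivative $\frac{d}{dt}\operatorname{Re}Q(\rho e^{it})=-e^{\lambda\rho\cos t}\sin(\lambda\rho\sin t)$ has the sign of $-\sin t$ when $\lambda\rho<\pi/2$. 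By the minimum principle, the minimum of $\operatorname{Re}Q$ on $|\zeta|\le r$ is therefore $Q(-r)$.
\item Thus $|f'(z)|\ge f_\lambda'(-r)$. Your segment argument, with $f_\lambda'(-\eta)$ in place of $e^{-I(\eta)}$, then gives $|f(z)|\ge-f_\lambda(-r)$.
\end{itemize}
Both bounds are attained by $f_\lambda$ at $z=-r$. This is the Ma--Minda distortion theorem for $\mathcal{C}(\varphi)$, and it is the correct sharp replacement for the lower bounds in the statement.
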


\begin{proof}[\bf Proof of Theorem \ref{Th-2.2}]
	Let $f \in \mathcal C_{e^\lambda}$. By definition of differential subordination, there exists a Schwarz function $\phi$ analytic in $\mathbb{D}$ with $|\phi(z)| \le 1$ such that 
	\begin{equation}\label{Eq-2.2}
		1 + \frac{zf''(z)}{f'(z)} = e^{\lambda z \phi(z)}.
	\end{equation}
	Writing $z = r e^{i\theta}$ ($0 < r < 1$), we have $| \lambda z \phi(z) | \le \lambda r$. Since $|e^w - 1| \le e^{|w|} - 1$ for all $w \in \mathbb{C}$, it follows from \eqref{Eq-2.2} that
	\begin{equation*}
		\left| \frac{zf''(z)}{f'(z)} \right| = \left| e^{\lambda z \phi(z)} - 1 \right| \le e^{\lambda r} - 1.
	\end{equation*}
	Consequently, in view of the inequality $-|w| \le \operatorname{Re}(w) \le |w|$, it is easy to see that
	\begin{equation*}
		-(e^{\lambda r} - 1) \le \operatorname{Re}\left(\frac{zf''(z)}{f'(z)}\right) \le e^{\lambda r} - 1,
	\end{equation*}
	which gives the estimate
	\begin{equation}\label{Eqq-2.3}
		-\frac{e^{\lambda r} - 1}{r} \le \frac{\partial}{\partial r} \log |f'(re^{i\theta})| \le \frac{e^{\lambda r} - 1}{r}.
	\end{equation}
	Integrating both sides of \eqref{Eqq-2.3} with respect to $r$ from $0$ to $|z|$, we obtain
	\begin{equation}\label{Eqq-2.4}
		-\int_0^{|z|} \frac{e^{\lambda \xi} - 1}{\xi} d\xi \le \log |f'(z)| \le \int_0^{|z|} \frac{e^{\lambda \xi} - 1}{\xi} d\xi.
	\end{equation}
	Exponentiating the inequality \eqref{Eqq-2.4}, we obtain the required sharp two-sided distortion bounds
	\begin{equation*}
		\exp\left(-\int_0^{|z|} \frac{e^{\lambda \xi} - 1}{\xi} d\xi\right) \le |f'(z)| \le \exp\left(\int_0^{|z|} \frac{e^{\lambda \xi} - 1}{\xi} d\xi\right).
	\end{equation*}
	Next, for the growth bounds, integrating the upper estimate along the radial segment from $0$ to $z = |z|e^{i\theta}$ gives
	\begin{equation*}
		|f(z)| = \left| \int_0^{|z|} f'(\eta e^{i\theta}) e^{i\theta} d\eta \right| \le \int_0^{|z|} |f'(\eta e^{i\theta})| d\eta \le \int_0^{|z|} \exp\left(\int_0^{\eta} \frac{e^{\lambda \xi} - 1}{\xi} d\xi\right) d\eta.
	\end{equation*}
	For the lower bound of $|f(z)|$, let $d = |f(z)|$. The image $f(\mathbb{D})$ contains a line segment $\Gamma$ joining $0$ to $f(z)$ of length $d$. The preimage $\gamma = f^{-1}(\Gamma)$ is an arc in $\mathbb{D}$ connecting $0$ to $z$. Thus,
	\begin{equation*}
		|f(z)| = \int_{\Gamma} |dw| = \int_{\gamma} |f'(\zeta)| |d\zeta| \ge \int_0^{|z|} \exp\left(-\int_0^{\eta} \frac{e^{\lambda \xi} - 1}{\xi} d\xi\right) d\eta.
	\end{equation*}
	
	The sharpness of all bounds follows directly from the function $f_\lambda \in \mathcal C_{e^\lambda}$ defined by \eqref{Eq-2.1}, corresponding to the extremal Schwarz function $\phi(z) \equiv 1$. Equality in the upper bounds is attained at $z = r > 0$, while equality in the lower bounds is attained at $z = -r < 0$.
\end{proof}
Next, we find the sharp bounds of the pre-Schwarzian and Schwarzian norms for the class $\mathcal C_{e^\lambda}$ with respect to $f''(0)$, assuming that $f''(0) = 0$. The following lemma, adapted from, is a key tool in proving these results.\vspace{2mm}

\begin{lemB}\emph{\cite{Carrasco-Hernandez-AMP-2023}}
	If $\phi(z): \mathbb{D} \to \mathbb{D}$ is an analytic function, then  
	\begin{equation}
		\frac{|\phi(z)|^2}{1 - |\phi(z)|^2} \le \frac{(|\phi(0)| + |z|)^2}{(1 - |\phi(0)|^2)(1 - |z|^2)}.
	\end{equation}
\end{lemB}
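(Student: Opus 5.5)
We prove Lemma B, a refined Schwarz--Pick estimate. Write $a=\phi(0)$. We reduce to the case where the self-map fixes the origin, and then use the Schwarz--Pick invariance of the pseudo-hyperbolic distance. If $|a|=1$ the maximum modulus principle forces $\phi$ to be constant, contradicting $\phi:\mathbb{D}\to\mathbb{D}$, so $|a|<1$.

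\textbf{Step 1 (Möbius reduction).} Compose with the disk automorphism $T_a(w)=(w-a)/(1-\bar a w)$. The function $\psi=T_a\circ\phi$ maps $\mathbb{D}$ into $\mathbb{D}$ and satisfies $\psi(0)=0$. Lemma A then gives $|\psi(z)|\le|z|$, that is,
\[
\rho(\phi(z),a)\le |z|, \qquad \rho(w,a):=\left|\frac{w-a}{1-\bar a w}\right|.
\]

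\textbf{Step 2 (bound $|\phi(z)|$).} The pseudo-hyperbolic distance satisfies the triangle-type inequality
\[
\rho(w,0)\le \frac{\rho(w,a)+\rho(a,0)}{1+\rho(w,a)\rho(a,0)}.
\]
Since the right side is increasing in $\rho(w,a)$, this yields
\[
|\phi(z)|\le \frac{|a|+|z|}{1+|a||z|}=:s.
\]
Equivalently, one can invert $\phi(z)=T_{-a}(\psi(z))$ and maximize $|(u+a)/(1+\bar a u)|$ over $|u|\le|z|$, which is attained on the circle $|u|=|z|$ in the direction of $a$.

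\textbf{Step 3 (conversion).} The map $t\mapsto t^2/(1-t^2)$ is increasing on $[0,1)$, so
\[
\frac{|\phi(z)|^2}{1-|\phi(z)|^2}\le \frac{s^2}{1-s^2}.
\]
A direct computation gives
\[
1-s^2=\frac{(1-|a|^2)(1-|z|^2)}{(1+|a||z|)^2}.
\]
Substituting, we get
\[
\frac{s^2}{1-s^2}=\frac{(|a|+|z|)^2}{(1-|a|^2)(1-|z|^2)},
\]
which is exactly the claimed bound, with equality attainable when $\phi$ is an automorphism of the appropriate form.

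The main obstacle is Step 2, namely justifying the strong triangle inequality for $\rho$, or equivalently the maximization over a disk. I would handle it directly. The image of the disk $\{|u|\le r\}$ under the Möbius map $u\mapsto (u+a)/(1+\bar a u)$ is a disk symmetric about the line through $0$ and $a$. Its farthest point from the origin is the image of $u=r\,a/|a|$, which has modulus $(|a|+r)/(1+|a|r)$. When $a=0$ the claim is simply Lemma A.
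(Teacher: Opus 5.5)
Your proof is correct: Lemma A applied to $T_a\circ\phi$ gives $|\phi(z)|\le (|\phi(0)|+|z|)/(1+|\phi(0)||z|)$, and together with the monotonicity of $t\mapsto t^2/(1-t^2)$ and the identity $1-s^2=(1-|a|^2)(1-|z|^2)/(1+|a||z|)^2$ this yields exactly the stated inequality. The paper gives no proof of Lemma B (it only cites Carrasco--Hern\'andez), and your argument is the standard route to it; the maximum-modulus remark is unnecessary, since $|\phi(0)|<1$ holds automatically for a map into the open disk.
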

Let $\Omega$ be a simply connected domain in the complex plane $\mathbb{C}$ containing at least two boundary points. For a locally univalent analytic function $f$ defined on $\Omega$, the \emph{pre-Schwarzian derivative} $P_f$ and the \emph{Schwarzian derivative} $S_f$ are defined by (see \cite{Kim-etal-2004,Okuyama-2000})
\begin{align*}
	P_f(z) = \frac{f''(z)}{f'(z)}
\end{align*}
and
\begin{align*}
	S_f(z) = (P_f)'(z) - \frac{1}{2}\left(P_f(z)\right)^2 = \frac{f'''(z)}{f'(z)} - \frac{3}{2}\left(\frac{f''(z)}{f'(z)}\right)^2,
\end{align*}
respectively. Let $\lambda_{\Omega}(z)|dz|$ denote the hyperbolic (Poincar\'e) metric on $\Omega$ with constant Gaussian curvature $-4$. The \emph{pre-Schwarzian norm} $\|P_f\|_{\Omega}$ and the \emph{Schwarzian norm} $\|S_f\|_{\Omega}$ of $f$ on $\Omega$ are defined by
\begin{align*}
	\|P_f\|_{\Omega} = \sup_{z \in \Omega} |P_f(z)| \lambda_{\Omega}^{-1}(z) \quad \text{and} \quad \|S_f\|_{\Omega} = \sup_{z \in \Omega} |S_f(z)| \lambda_{\Omega}^{-2}(z).
\end{align*}
In particular, if $\Omega$ is the open unit disk $\mathbb{D} = \{z \in \mathbb{C} : |z| < 1\}$, then the Poincar\'e density is given by $\lambda_{\mathbb{D}}(z) = (1 - |z|^2)^{-1}$. In this case, the norms are simply written as $\|P_f\|$ and $\|S_f\|$, which take the forms
\begin{align*}
	\|P_f\| = \sup_{z \in \mathbb{D}} (1 - |z|^2) \left| \frac{f''(z)}{f'(z)} \right| \quad \text{and} \quad \|S_f\| = \sup_{z \in \mathbb{D}} (1 - |z|^2)^2 |S_f(z)|.
\end{align*}
These derivatives and their associated hyperbolic norms possess several fundamental transformation and invariance properties:\vspace{1.2mm}

\noindent{\bf [I].} For locally univalent analytic functions $f$ and $g$ for which the composition $f \circ g$ is well defined, the differential operators satisfy the classical composition formulas:
	\begin{align*}
		P_{f \circ g}(z) &= P_f(g(z)) g'(z) + P_g(z), \\
		S_{f \circ g}(z) &= S_f(g(z)) (g'(z))^2 + S_g(z).
	\end{align*}
	
	\noindent{\bf [II].} If $T(z) = \frac{az + b}{cz + d}$ ($ad - bc \neq 0$) is a non-constant M\"obius transformation, then $S_T(z) \equiv 0$. Consequently, the Schwarzian derivative exhibits M\"obius invariance: $S_{T \circ f}(z) = S_f(z)$. In contrast, for $T(z) = az + b$ ($a \neq 0$), $P_T(z) \equiv 0$, rendering the pre-Schwarzian derivative invariant under affine transformations: $P_{T \circ f}(z) = P_f(z)$.\vspace{2mm}
	
	\noindent{\bf [III].} Let $\text{Aut}(\mathbb{D})$ denote the group of conformal automorphisms of $\mathbb{D}$. The pre-Schwarzian and Schwarzian norms on $\mathbb{D}$ are invariant under domain automorphisms; that is, for any $\phi \in \text{Aut}(\mathbb{D})$,
	\begin{align*}
		\|P_{f \circ \phi}\| = \|P_f\| \quad \text{and} \quad \|S_{f \circ \phi}\| = \|S_f\|.
	\end{align*}
	
	\noindent{\bf [IV].} The norms $\|P_f\|$ and $\|S_f\|$ play a crucial role in univalence criteria
	\begin{itemize}
		\item[(a).] \emph{Becker's Criterion:} If $\|P_f\| \le 1$, then $f$ is univalent in $\mathbb{D}$.
		\item[(b).] \emph{Nehari's Criterion:} If $\|S_f\| \le 2$, then $f$ is univalent in $\mathbb{D}$. Conversely, if $f$ is univalent in $\mathbb{D}$, then $\|S_f\| \le 6$.
		\item[(c).] A function $f$ is uniformly locally univalent in $\mathbb{D}$ if, and only if, $\|P_f\| < \infty$ (or equivalently, $\|S_f\| < \infty$).
	\end{itemize}
It is well-known that the pre-Schwarzian norm $||P_f||\leq 6$ holds for the univalent analytic function $f$ is defined in $\mathbb{D}$. In $1972$, Becker \cite{Becker-JRAM-1983} used the pre-Schwarzian derivative to obtain the sufficient condition that the function in $\mathbb{D}$ is univalent, in other words, if $||P_f||\leq1$, then the function $f$ is univalent in $\mathbb{D}$. In $1976$, Yamashita \cite{Yamashita-1976} proved that $||P_f||$ is finite if, and only if, $f$ is uniformly locally univalent in $\mathbb{D}$, \emph{i.e.}, there exists a constant $\rho$ such that $f$ is univalent on the hyperbolic disk $|(z-a)/(1-\bar{a}z)|<\tanh\rho$ of radius $\rho$ for every $a\in\mathbb{D}$. Sugawa \cite{Sugawa-1998} studied the strongly starlike functions of order $\alpha\; (0<\alpha\leq1)$. Yamashita\cite{Yamashita-HMJ-1999} generalized sugawa's results by a general class named Gelfer-starlike of exponential order $\alpha (\alpha>0)$ and the Gelfer-close-to-convex of exponential order $(\alpha,\beta)$ ($\alpha>0$, $\beta>0$). These Gelfer classes also contain the classical starlike, convex, close-to-convex all of order $\alpha$ ($0\leq\alpha<1$), which are denote by $\mathcal{S^*(\alpha)}$, $\mathcal{C(\alpha)}$, $\mathcal{K(\alpha)}$ respectively (see \cite{Ponnusamy-Sahoo-2008,Sokol-Stankiewicz-1996,Kumar-Ravichandran-2018,Kanas-2006,Kim-Sugawa-2002})\vspace{2mm}

The following theorem provides the sharp bound for the pre-Schwarzian norm of functions in the class $\mathcal C_{e^\lambda}$.
\begin{thm}\label{Th-2.3}
	Let $0 < \lambda \le \pi/2$ and $f \in \mathcal C_{e^\lambda}$. Then the pre-Schwarzian norm of $f$ satisfies
	\begin{align}\label{Eq-2.3}
		\|P_f\| &= \sup_{z \in \mathbb{D}} (1 - |z|^2) \left| \frac{f''(z)}{f'(z)} \right| \\&\nonumber\le 
		\begin{cases} 
			\lambda, & \text{if } g'(r) \le 0 \text{ for all } r \in (0,1), \\[1.5ex]
			g(r_0), & \text{where } r_0 \in (0,1) \text{ is the unique root of } g'(r) = 0,
		\end{cases}
	\end{align}
	where $g:(0,1) \to \mathbb{R}$ is defined by
	\begin{align*}
		g(r) = \frac{1 - r^2}{r} \left(e^{\lambda r} - 1\right).
	\end{align*}
	The estimate \eqref{Eq-2.3} is sharp.
\end{thm}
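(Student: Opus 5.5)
The plan is to reduce the norm to a one-variable supremum using Theorem \ref{Th-2.1}(iii), then study that function on $(0,1)$. For $f \in \mathcal C_{e^\lambda}$ and $z\neq 0$, Theorem \ref{Th-2.1}(iii) gives
\begin{align*}
(1-|z|^2)\left|\frac{f''(z)}{f'(z)}\right| \le g(|z|).
\end{align*}
At $z=0$ the quantity equals $|f''(0)|$, and Theorem \ref{Th-2.1} shows $|f''(0)|\le\lambda$. Since $g(r)\to\lambda$ as $r\to 0^+$, this value is already covered by $\sup_{0<r<1} g(r)$. Hence
\begin{align*}
\|P_f\|\le \sup_{0<r<1} g(r).
\end{align*}

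The second step is to identify this supremum. At the endpoints, $g(0^+)=\lambda$ and $g(1^-)=0$. Expanding $e^{\lambda r}-1$ gives
\begin{align*}
g(r)=(1-r^2)\sum_{n\ge1}\frac{\lambda^n r^{n-1}}{n!},
\end{align*}
and differentiating,
\begin{align*}
g'(r)=\frac{(1-r^2)\lambda r e^{\lambda r}-(1+r^2)(e^{\lambda r}-1)}{r^2}.
\end{align*}
The sign of $g'$ is governed by the numerator $N(r)=(1-r^2)\lambda r e^{\lambda r}-(1+r^2)(e^{\lambda r}-1)$. We have $N(0)=0$ and $N(1)=-2(e^\lambda-1)<0$. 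Near $0$, $g'(0^+)=\lambda^2/2>0$, so $g$ first increases and then must decrease toward $0$.

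Two cases follow. If $g'\le0$ throughout, the supremum is the limit $\lambda$ at $0^+$. Otherwise $g$ has an interior critical point, and the supremum is $g(r_0)$. The main obstacle is proving that $g'$ has exactly one zero, so that this critical point is the global maximum and $r_0$ is well defined. I would show this by proving that $N(r)/r$ changes sign only once on $(0,1)$. One route is to write $N(r)/(r\,e^{\lambda r})$ and show it is strictly decreasing, using $\lambda\le\pi/2$ to control the power series. A second route is to show $\log g$ is concave: $\log(1-r^2)$ is concave, and $\log\frac{e^{\lambda r}-1}{r}$ is the logarithm of a power series with positive coefficients in $\lambda r$ and can be checked directly. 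If concavity fails in general, I would fall back on the statement's formulation, which already presents $r_0$ as "the unique root". In that case I only need that the maximum over the compact extension of $g$ is attained either at $0$ or at a critical point.

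For sharpness, take $f_\lambda$ from \eqref{Eq-2.1}, i.e. $\phi\equiv1$. On the positive axis Theorem \ref{Th-2.1} gives equality: $(1-r^2)f_\lambda''(r)/f_\lambda'(r)=g(r)$. Therefore
\begin{align*}
\|P_{f_\lambda}\|\ge\sup_{0<r<1} g(r),
\end{align*}
which combined with the upper bound yields equality in each case (evaluating at $r_0$, or letting $r\to0^+$ in the first case).
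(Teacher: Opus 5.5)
Your argument is the paper's proof: the pointwise bound $(1-|z|^2)|f''(z)/f'(z)|\le g(|z|)$ from $|e^w-1|\le e^{|w|}-1$, the endpoint behaviour of $g$, and sharpness via $f_\lambda$ (i.e.\ $\phi\equiv 1$). The one step not actually carried out is the uniqueness of $r_0$. The paper has the same gap: it simply asserts that the critical point is unique. Your proposed ways of closing it do not work as stated.

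Your first route fails:
\[
\frac{N(r)}{re^{\lambda r}}=(1-r^2)\lambda-(1+r^2)\frac{1-e^{-\lambda r}}{r}=\frac{\lambda^2}{2}\,r+O(r^2),
\]
so this quantity vanishes at $0$ and increases there. It cannot be strictly decreasing on $(0,1)$. Your fallback reads uniqueness off the statement, which is circular, since uniqueness is part of what is claimed.

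Your second route does work, but not for the reason you hint at. The function $\log\frac{e^{\lambda r}-1}{r}=\log\lambda+\log\int_0^1 e^{\lambda r t}\,dt$ is \emph{convex} in $r$ (a cumulant generating function). Its second derivative is $\lambda^2$ times the variance of a probability measure on $[0,1]$, hence at most $\lambda^2/4\le \pi^2/16$. Since $\bigl(\log(1-r^2)\bigr)''=-2(1+r^2)/(1-r^2)^2\le -2$, you get $(\log g)''\le -2+\pi^2/16<0$ on $(0,1)$. So $g'/g$ decreases strictly from $\lambda/2$ at $0^+$ to $-\infty$ at $1^-$. Hence $g'$ has exactly one zero $r_0$, and $g(r_0)$ is the global maximum.

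Finally, your own computation $g'(0^+)=\lambda^2/2>0$ shows that the first case of the statement is vacuous. The bound is always $g(r_0)$, and $g(r_0)>\lambda$. You should say so explicitly, rather than treating ``$g'\le 0$ throughout'' as a live case.
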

\begin{proof}[\bf Proof of Theorem \ref{Th-2.3}]
	Let $f \in \mathcal C_{e^\lambda}$. By the definition of differential subordination, there exists an analytic Schwarz function $\phi \in \mathbb{B}$ with $|\phi(z)| \le 1$ for all $z \in \mathbb{D}$ such that
	\begin{align*}
		1 + \frac{z f''(z)}{f'(z)} = e^{\lambda z \phi(z)}, \quad z \in \mathbb{D}.
	\end{align*}
	Rearranging this differential relation yields the explicit form of the pre-Schwarzian derivative:
	\begin{align*}
		\frac{f''(z)}{f'(z)} = \frac{e^{\lambda z \phi(z)} - 1}{z}, \quad z \in \mathbb{D} \setminus \{0\}.
	\end{align*}
	Multiplying both sides by $(1 - |z|^2)$, taking the absolute value, and using the inequality $|e^w - 1| \le e^{|w|} - 1$ along with $|\phi(z)| \le 1$, we obtain for $z \in \mathbb{D} \setminus \{0\}$:
	\begin{align*}
		(1 - |z|^2) \left| \frac{f''(z)}{f'(z)} \right| &= (1 - |z|^2) \left| \frac{e^{\lambda z \phi(z)} - 1}{z} \right| \\
		&\le \frac{1 - |z|^2}{|z|} \left( e^{\lambda |z| |\phi(z)|} - 1 \right) \\
		&\le \frac{1 - |z|^2}{|z|} \left( e^{\lambda |z|} - 1 \right) := g(|z|).
	\end{align*}
	Taking the supremum over all $z \in \mathbb{D}$, it follows that
	\begin{align*}
		\|P_f\| \le \sup_{0 \le r < 1} g(r).
	\end{align*}
	To evaluate $\displaystyle\sup_{0 \le r < 1} g(r)$, we analyze the behavior of $g(r)$ on $(0,1)$. Using the Taylor expansion 
	\begin{align*}
		e^{\lambda r} - 1 = \lambda r + \frac{\lambda^2 r^2}{2} + \mathcal{O}(r^3)
	\end{align*} as $r \to 0^+$, we find
	\begin{align*}
		\lim_{r \to 0^+} g(r) = \lim_{r \to 0^+} \frac{1 - r^2}{r} \left( \lambda r + \frac{\lambda^2 r^2}{2} + \mathcal{O}(r^3) \right) = \lambda.
	\end{align*}
	Moreover, $\lim_{r \to 1^-} g(r) = 0$. Differentiating $g(r)$ with respect to $r$ gives
	\begin{align*}
		g'(r) = \frac{1}{r^2} \left[ (1 - r^2)\lambda r e^{\lambda r} - (1 + r^2)\left(e^{\lambda r} - 1\right) \right].
	\end{align*}
	If $g'(r) \le 0$ for all $r \in (0,1)$, then $g$ is strictly decreasing on $(0,1)$, and its supremum is attained at the origin, giving 
	\begin{align*}
		\sup_{0 \le r < 1} g(r) = \lambda.
	\end{align*} Otherwise, continuous differentiation implies that $g(r)$ attains a local maximum at a unique critical point $r_0 \in (0,1)$ satisfying $g'(r_0) = 0$, which gives
	\begin{align*}
		\sup_{0 \le r < 1} g(r) = g(r_0) = \frac{1 - r_0^2}{r_0} (e^{\lambda r_0} - 1).
	\end{align*}
	To verify the sharpness of \eqref{Eq-2.3}, consider the function $f_\lambda \in \mathcal C_{e^\lambda}$ defined by $1 + z f_\lambda''(z)/f_\lambda'(z) = e^{\lambda z}$, corresponding to $\phi(z) \equiv 1$. For this function, choosing $z = r \in (0,1)$ gives
	\begin{align*}
		\left(1 - r^2\right) \left| \frac{f_\lambda''(r)}{f_\lambda'(r)} \right| = \frac{1 - r^2}{r} \left( e^{\lambda r} - 1 \right) := g(r).
	\end{align*}
	Taking the supremum over $r \in (0,1)$ shows that $\|P_{f_\lambda}\|$ precisely matches the upper bound in \eqref{Eq-2.3}, completing the proof.
\end{proof}

We first derive an explicit representation for $S_f(z)$ in terms of the underlying Schwarz function.

\begin{prop}\label{Prop-Sf-formula}
	Let $0 < \lambda \le \pi/2$ and $f \in \mathcal C_{e^\lambda}$. Then there exists an analytic Schwarz function $\omega \in \mathbb{B}$ such that
	\begin{equation}\label{Eq-Sf-explicit}
		S_f(z) = \frac{\lambda (\omega(z) + z\omega'(z)) e^{\lambda z \omega(z)} z - (e^{\lambda z \omega(z)} - 1)}{z^2} - \frac{1}{2}\left(\frac{e^{\lambda z \omega(z)} - 1}{z}\right)^2, \quad z \in \mathbb{D} \setminus \{0\}.
	\end{equation}
	Furthermore, $S_f(z)$ satisfies the point-wise inequality
	\begin{equation}\label{Eq-Sf-pointwise}
		(1 - |z|^2)^2 |S_f(z)| \le 2\lambda |\omega'(z) + (1 - \lambda)\omega^2(z)| \frac{(1 - |z|^2)^2}{|1 - z\omega(z)|^2}.
	\end{equation}
\end{prop}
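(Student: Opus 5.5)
The plan is to start from the representation already used in the proofs of Theorems \ref{Th-2.1}--\ref{Th-2.3}. Since $f\in\mathcal C_{e^\lambda}$, there is $\omega\in\mathbb{B}$ with $1+zf''(z)/f'(z)=e^{\lambda z\omega(z)}$. Hence
\begin{align*}
P_f(z)=\frac{e^{\lambda z\omega(z)}-1}{z},\quad z\in\mathbb{D}\setminus\{0\}.
\end{align*}
Put $u(z)=z\omega(z)$, so that $u'(z)=\omega(z)+z\omega'(z)$. The quotient rule gives
\begin{align*}
P_f'(z)=\frac{\lambda u'(z)e^{\lambda u(z)}z-(e^{\lambda u(z)}-1)}{z^2}.
\end{align*}
Inserting this into $S_f=(P_f)'-\tfrac12 P_f^2$ yields \eqref{Eq-Sf-explicit} at once. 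This part is routine bookkeeping. I would also record that the right-hand side has a removable singularity at $0$, because $e^{\lambda u}-1=\lambda u+O(z^2)$ and $u(0)=0$.

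For \eqref{Eq-Sf-pointwise}, I will first put $S_f$ over a common denominator and organise it in powers of $E:=e^{\lambda u}$:
\begin{align*}
z^2S_f(z)=\lambda z u'E-(E-1)-\tfrac12(E-1)^2=\lambda zu'E+\tfrac12-\tfrac12E^2 .
\end{align*}
Next I substitute $u'=\omega+z\omega'$. The aim is to isolate the combination $\omega'+(1-\lambda)\omega^2$ as a factor times $\lambda z^2E$, with a remainder that carries the factor $\omega$.

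To do this, I will expand $\tfrac12(1-E^2)=-\lambda u-\lambda^2u^2-\cdots$ and match terms:
\begin{align*}
\lambda z\omega E-\lambda u&=\lambda^2 z^2\omega^2+\cdots,\\
-\lambda^2u^2&=-\lambda^2 z^2\omega^2 .
\end{align*}
Grouping these so that $z^2$ cancels leaves $\lambda E(\omega'+(1-\lambda)\omega^2)$ plus higher-order terms. I would then try to absorb the higher-order terms through the factor $1/(1-z\omega)^2$. Concretely, I would compare $E=e^{\lambda z\omega}$ with the Möbius-type expression $(1-z\omega)^{-1}$, using the admissible range $0<\lambda\le\pi/2$ to control $|E|$ and $|E-1|$ via $|e^w-1|\le e^{|w|}-1$ together with Lemma B.

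The main obstacle is exactly this grouping. The exponential does not factor exactly against $(1-z\omega)^{-2}$, so the remainder must be dominated by $2\lambda|\omega'+(1-\lambda)\omega^2|/|1-z\omega|^2$. I would attempt this first on the radial extremal case $\omega\equiv$ constant and then extend it pointwise.

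I also expect to check the pointwise inequality at $z=0$ and for constant $\omega$ with $|\omega|=1$. The unimodular constant case is a genuine necessary test: for $\omega\equiv 1$ and $\lambda=1$ the right-hand side of \eqref{Eq-Sf-pointwise} vanishes identically, while $S_f(0)=\lambda-\tfrac32\lambda^2\omega(0)^2=-\tfrac12\neq0$, so the stated form already fails there. If this test fails, as it does in that case, the factor must be adjusted before the grouping argument can close.
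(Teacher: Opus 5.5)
Your derivation of \eqref{Eq-Sf-explicit} matches the paper's: apply the quotient rule to $P_f=(e^{\lambda z\omega}-1)/z$, then use $S_f=P_f'-\tfrac12P_f^2$. That part is fine. For \eqref{Eq-Sf-pointwise}, however, you give no proof. The grouping step is only announced, and it rests on a miscalculation. Put $w=\lambda z\omega(z)$ and $E=e^{w}$, and use $-(E-1)-\tfrac12(E-1)^2=\tfrac12(1-E^2)$. Then one has exactly
\[
z^2S_f(z)=\lambda z^2\omega'(z)e^{w}+h(w),\qquad h(w)=we^{w}+\tfrac12-\tfrac12e^{2w}=-\tfrac16w^3+O(w^4).
\]
The $\omega^2$ terms cancel identically, so no factor $\omega'+(1-\lambda)\omega^2$ can be extracted. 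What you actually get is $S_f=\lambda\omega'e^{\lambda z\omega}+h(\lambda z\omega)/z^2$.

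You are right that \eqref{Eq-Sf-pointwise} is false, but the test you cite is miscomputed. The identity gives $S_f(0)=\lambda\omega'(0)$, equivalently $f'''(0)=\lambda\omega'(0)+\tfrac32\lambda^2\omega(0)^2$. So for $\omega\equiv1$ one has $S_f(0)=0$, and both sides vanish at the origin. The failure occurs off the origin:
\begin{itemize}
\item For $\lambda=1$ and $\omega\equiv1$ (the function $f_\lambda$), the right-hand side is identically $0$. Yet $S_f(z)=h(z)/z^2=-z/6+O(z^2)$, and for example $S_f(1/2)\approx-0.139$.
\item For $\lambda\neq1$, take $\omega(z)=\tfrac12-\tfrac14(1-\lambda)z\in\mathbb{B}$. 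At $z=0$ the right-hand side is $2\lambda|\omega'(0)+(1-\lambda)\omega(0)^2|=0$, whereas $|S_f(0)|=\lambda|1-\lambda|/4>0$. By continuity the inequality also fails at points $z\neq0$ near $0$.
\end{itemize}
Thus \eqref{Eq-Sf-pointwise} fails for every $\lambda\in(0,\pi/2]$, with $\omega$ the Schwarz function of the subordination. The paper's one-line appeal to the triangle inequality and Schwarz--Pick does not establish it either, and no grouping argument can close this step. Only \eqref{Eq-Sf-explicit}, equivalently the identity above, survives. Any Schwarzian-norm estimate would have to be built from that identity instead.
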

\begin{proof}[\bf Proof of Proposition \ref{Prop-Sf-formula}]
	Let $f \in \mathcal C_{e^\lambda}$. By definition of differential subordination, there exists a Schwarz function $\omega \in \mathbb{B}$ (i.e., analytic in $\mathbb{D}$ with $|\omega(z)| \le 1$) such that
	\begin{equation*}
		1 + \frac{z f''(z)}{f'(z)} = e^{\lambda z \omega(z)}, \quad z \in \mathbb{D}.
	\end{equation*}
	Rearranging this equation gives the pre-Schwarzian derivative
	\begin{equation}\label{Eq-pre-Schwarzian-omega}
		P_f(z) = \frac{f''(z)}{f'(z)} = \frac{e^{\lambda z \omega(z)} - 1}{z}, \quad z \in \mathbb{D} \setminus \{0\}.
	\end{equation}
	Differentiating \eqref{Eq-pre-Schwarzian-omega} with respect to $z$, we obtain
	\begin{equation*}
		P_f'(z) = \left(\frac{f''(z)}{f'(z)}\right)' = \frac{\lambda (\omega(z) + z\omega'(z)) e^{\lambda z \omega(z)} z - (e^{\lambda z \omega(z)} - 1)}{z^2}.
	\end{equation*}
	By definition, the Schwarzian derivative is given by 
	\begin{align*}
		S_f(z) = P_f'(z) - \frac{1}{2}(P_f(z))^2
	\end{align*} which immediately yields the explicit formula \eqref{Eq-Sf-explicit}.\vspace{1.2mm}
	
	Applying the triangle inequality along with the Schwarz--Pick inequality 
	\begin{align*}
		|\omega'(z)| \le \frac{1 - |\omega(z)|^2}{1 - |z|^2}
	\end{align*} directly establishes the estimate \eqref{Eq-Sf-pointwise}.
\end{proof}
Let $\mathcal{C}_{\lambda,0}^e$ denote the subclass of functions $f \in \mathcal C_{e^\lambda}$ satisfying the additional normalization condition $f''(0) = 0$, defined by
\begin{equation*}
	\mathcal{C}_{\lambda,0}^e := \left\{ f \in \mathcal C_{e^\lambda} : f''(0) = 0 \right\}.
\end{equation*}
Using Proposition \ref{Prop-Sf-formula}, we first evaluate the Schwarzian norm under the condition $f''(0) = 0$, corresponding to the normalized subclass $\mathcal{C}_{\lambda,0}^e := \{f \in \mathcal C_{e^\lambda} : f''(0) = 0\}$.

\begin{cor}\label{Cor-Sf-normalized}
	If $f \in \mathcal{C}_{\lambda,0}^e$ and $0 < \lambda \le \pi/2$, then
	\begin{equation}\label{Eq-Cor-Sf-bound}
		\|S_f\| = \sup_{z \in \mathbb{D}} (1 - |z|^2)^2 |S_f(z)| \le 2\lambda(1 + \lambda).
	\end{equation}
	The bound \eqref{Eq-Cor-Sf-bound} is sharp, with equality attained by the function $f_0 \in \mathcal{C}_{\lambda,0}^e$ given by
	\begin{align}
		\label{Eq-f0-extremal}
		f_0(z) &= \int_0^z \exp\left( \int_0^t \frac{e^{\lambda \xi^2} - 1}{\xi} d\xi \right) dt \\&= z + \frac{\lambda}{6} z^3 + \frac{\lambda^2}{16} z^5 + \mathcal{O}(z^7).\nonumber
	\end{align}
\end{cor}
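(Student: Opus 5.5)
The plan is to start from the representation used throughout the paper, but to exploit the extra normalization $f''(0)=0$ before estimating, rather than applying Proposition~\ref{Prop-Sf-formula} directly. For $f\in\mathcal C_{e^\lambda}$ write $1+zf''(z)/f'(z)=e^{q(z)}$, where $q=\lambda\omega_1$ and $\omega_1$ is a Schwarz function. Comparing the coefficients of $z$ gives $\lambda\omega_1'(0)=f''(0)$. So the condition $f''(0)=0$ forces $\omega_1(0)=\omega_1'(0)=0$. By the Schwarz lemma (Lemma A) we can then write $q(z)=\lambda z^2\psi(z)$ with $\psi\in\mathbb{B}$.

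The first key step is an algebraic simplification of the Schwarzian. From $P_f=(e^q-1)/z$ and $S_f=P_f'-\tfrac12P_f^2$, together with the identity $(e^q-1)+\tfrac12(e^q-1)^2=\tfrac12(e^{2q}-1)$, one gets
\begin{equation*}
S_f(z)=\frac{zq'(z)e^{q(z)}-\tfrac12\bigl(e^{2q(z)}-1\bigr)}{z^2}
=\lambda e^{q}\bigl(2\psi+z\psi'\bigr)-\frac{e^{2q}-1}{2z^2}.
\end{equation*}
A naive triangle inequality on these two pieces gives $3\lambda$ near $z=0$. That exceeds $2\lambda(1+\lambda)$ when $\lambda<1/2$, so the two terms must be combined first.

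The combination is done by expanding $e^q=\sum_k q^k/k!$ and $e^{2q}=\sum_k (2q)^k/k!$, which gives
\begin{equation*}
S_f=\lambda(\psi+z\psi')+\sum_{k\ge1}\frac{\lambda^{k+1}z^{2k}\psi^k}{k!}\Bigl(2\psi+z\psi'-\frac{2^{k}}{k+1}\psi\Bigr).
\end{equation*}
For the leading term, use the Schwarz--Pick inequality $|\psi'|\le(1-|\psi|^2)/(1-r^2)$ with $r=|z|$. This gives $(1-r^2)^2\lambda|\psi+z\psi'|\le\lambda\bigl((1-r^2)^2+r(1-r^2)\bigr)$, which is at most $2\lambda$. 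For the tail, majorize each term using $|\psi|\le1$ and the same Schwarz--Pick bound. This produces an explicit function $G(r)$ such that $(1-r^2)^2|S_f(z)|\le\lambda+\lambda G_1(r)+\lambda^2G_2(r,\lambda)$. The goal is then to show, by elementary calculus on $(0,1)$ using $0<\lambda\le\pi/2$, that the supremum of this majorant is at most $2\lambda(1+\lambda)$.

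I expect the main obstacle to be this last calculus step. The tail has a factor $2^k/(k+1)$ that grows, and it has to be beaten by the factor $(1-r^2)^2r^{2k}$ uniformly in $\lambda\le\pi/2$. I would first try to bound the whole tail by $\lambda^2 r^2(1-r^2)^2e^{2\lambda r^2}(\text{const}+r/(1-r^2))$ and check that its maximum is at most $2\lambda^2$. If that fails, I would switch to bounding $|e^{2q}-1-2q|$ and $|e^q-1|$ by $e^{2\lambda r^2}-1-2\lambda r^2$ and $e^{\lambda r^2}-1$, respectively.

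For sharpness, take $f_0$ from \eqref{Eq-f0-extremal}, that is $\psi\equiv1$ and $q=\lambda z^2$, and compute $(1-r^2)^2|S_{f_0}(\pm ir)|$ along rays, optimizing over $r$ and the direction. The checkpoint is the value at the origin, $S_{f_0}(0)=\lambda$. Since this is below $2\lambda(1+\lambda)$, equality can only be expected from boundary behaviour or from a rotation or Möbius-precomposed variant of $f_0$. Making the equality claim rigorous is the part I would scrutinize most carefully, and the sharpness statement may need to be weakened if no such variant attains the bound.
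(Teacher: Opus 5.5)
Your route to the inequality differs from the paper's and is the one that actually works. Your doubt about sharpness is also justified: there it is the statement itself that fails, not your plan.

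\textbf{Comparison with the paper.} The paper obtains \eqref{Eq-Cor-Sf-bound} by inserting $\omega(0)=0$ into the pointwise bound \eqref{Eq-Sf-pointwise}. That bound is exactly the modulus of the formula $S_f=2(1-\alpha)(\omega'+\alpha\omega^2)/(1-z\omega)^2$, which holds for convex functions of order $\alpha$, here with $\alpha=1-\lambda$. It does not follow from \eqref{Eq-Sf-explicit}. For $\lambda=1$ and $\omega\equiv1$ (the function $f_\lambda$ of \eqref{Eq-2.1}), its right-hand side vanishes identically, while $S_{f_1}(z)=-z/6+O(z^2)$. You instead work from the exact expression $S_f=\lambda e^{q}(2\psi+z\psi')-(e^{2q}-1)/(2z^2)$ with $q=\lambda z^2\psi$. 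That expression is correct, and so are your reduction to $q=\lambda z^2\psi$, your series, and your observation that the two pieces must be combined before estimating.

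\textbf{Closing the calculus step.} The step you were worried about is easy once the majorant is summed in closed form. Put $x=\lambda r^2$. Using $|\psi|\le1$ and $|\psi'|\le1/(1-r^2)$, the $k$-th term of your series is at most $\frac{\lambda x^k}{k!}\bigl(\bigl|2-\frac{2^k}{k+1}\bigr|+\frac{r}{1-r^2}\bigr)$. The numbers $2-\frac{2^k}{k+1}$ equal $1,1,\frac23,0$ for $k=0,1,2,3$ and are negative for $k\ge4$. Hence
\begin{equation*}
(1-r^2)^2|S_f(z)|\le\lambda\Bigl[(1-r^2)^2\Bigl(2\bigl(1+x+\tfrac{x^2}{3}\bigr)+\frac{e^{2x}-1}{2x}-2e^{x}\Bigr)+r(1-r^2)e^{x}\Bigr].
\end{equation*}
The first factor inside the bracket equals $\sum_k\frac{x^k}{k!}\bigl|2-\frac{2^k}{k+1}\bigr|$, so for fixed $r$ the whole bracket increases with $\lambda$. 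As $\lambda\to0$ it tends to $(1-r^2)(1+r-r^2)\le1.12$. At $\lambda=\pi/2$ a direct numerical check gives a maximum over $r$ of about $1.37$, near $r^2\approx0.3$. Therefore $\|S_f\|<1.4\lambda$ for every $f\in\mathcal{C}_{\lambda,0}^e$, which gives \eqref{Eq-Cor-Sf-bound} with a wide margin.

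\textbf{Sharpness fails.} Taking $\psi\equiv1$ gives
\begin{equation*}
S_{f_0}(z)=2\lambda e^{\lambda z^2}-\frac{e^{2\lambda z^2}-1}{2z^2}=\lambda+\lambda^2z^2+\frac{\lambda^3}{3}z^4+\cdots,
\end{equation*}
which is an entire function. So $(1-r^2)^2|S_{f_0}(r)|\to0$ as $r\to1^-$, and the paper's claim that this limit equals $2\lambda(1+\lambda)$ is false. In fact $\|S_{f_0}\|=\lambda$, attained at the origin. Rotations $e^{-i\theta}f_0(e^{i\theta}z)$ stay in the class and have the same norm. The estimate above shows that no function in the class comes near $2\lambda(1+\lambda)$. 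What is true is
\begin{equation*}
\lambda\le\sup_{f\in\mathcal{C}_{\lambda,0}^e}\|S_f\|<1.4\lambda .
\end{equation*}
So the equality clause cannot be proved by any argument, and you should drop it rather than search for an extremal variant. Separately, the expansion in \eqref{Eq-f0-extremal} should read $z+\frac{\lambda}{6}z^3+\frac{\lambda^2}{20}z^5+\cdots$.
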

\begin{proof}[\bf Proof of Corollary \ref{Cor-Sf-normalized}]
	Since $f \in \mathcal{C}_{\lambda,0}^e$, the condition $f''(0) = 0$ implies $\omega(0) = 0$ for the Schwarz function $\omega$ in Proposition \ref{Prop-Sf-formula}. Define the auxiliary mapping $\psi: \mathbb{D} \to \mathbb{D}$ by
	\begin{equation*}
		\psi(z) := \frac{\overline{z} - \omega(z)}{1 - z\omega(z)},
	\end{equation*}
	which satisfies the standard hyperbolic identity
	\begin{equation*}
		\frac{(1 - |z|^2)^2}{|1 - z\omega(z)|^2} = \frac{(1 - |\psi(z)|^2)(1 - |z|^2)}{1 - |\omega(z)|^2}.
	\end{equation*}
	Applying Schwarz's Lemma  as $\omega(0) = 0$, 
	\begin{align*}
		\frac{|\omega(z)|^2}{1 - |\omega(z)|^2} \le \frac{|z|^2}{1 - |z|^2}
	\end{align*} together with $1 - |\psi(z)|^2 \le 1$, the point-wise estimate \eqref{Eq-Sf-pointwise} simplifies to
	\begin{equation*}
		(1 - |z|^2)^2 |S_f(z)| \le 2\lambda (1 - |\psi(z)|^2)(1 + \lambda |z|^2) \le 2\lambda (1 + \lambda |z|^2) \le 2\lambda(1 + \lambda).
	\end{equation*}
	Taking the supremum over $z \in \mathbb{D}$ establishes the inequality \eqref{Eq-Cor-Sf-bound}.\vspace{1.2mm}
	
	To verify sharpness, consider $f_0 \in \mathcal{C}_{\lambda,0}^e$ defined by \eqref{Eq-f0-extremal}, which corresponds to $\omega(z) = z$. Evaluating $S_{f_0}(r)$ along the positive real axis $z = r \in (0,1)$, we see that
	\begin{equation*}
		\lim_{r \to 1^-} (1 - r^2)^2 |S_{f_0}(r)| = 2\lambda(1 + \lambda),
	\end{equation*}
	proving that the bound is sharp.
\end{proof}
Finally, we apply Proposition \ref{Prop-Sf-formula} to establish the general sharp bound for arbitrary function $f \in \mathcal C_{e^\lambda}$.
\begin{thm}\label{Th-Sf-general}
	If $f \in \mathcal C_{e^\lambda}$ and $0 < \lambda \le \pi/2$, then for all $z \in \mathbb{D}$,
	\begin{equation}\label{Eq-Th-Sf-general}
		\|S_f\| = \sup_{z \in \mathbb{D}} (1 - |z|^2)^2 |S_f(z)| \le 2\lambda (1 + \gamma)\left[2 - \lambda(1 + \gamma)\right],
	\end{equation}
	where $\gamma = |\omega(0)| = \frac{|f''(0)|}{\lambda} < 1$. The inequality \eqref{Eq-Th-Sf-general} is sharp.
\end{thm}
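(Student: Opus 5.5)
Following the pattern of Corollary \ref{Cor-Sf-normalized}, I would start from Proposition \ref{Prop-Sf-formula}. Let $\omega\in\mathbb{B}$ be the function with $1+zf''(z)/f'(z)=e^{\lambda z\omega(z)}$, so that $\omega(0)=f''(0)/\lambda$ and $\gamma=|\omega(0)|$. The pointwise estimate \eqref{Eq-Sf-pointwise} gives
\[
(1-|z|^2)^2|S_f(z)|\le 2\lambda\,\bigl|\omega'(z)+(1-\lambda)\omega^2(z)\bigr|\,\frac{(1-|z|^2)^2}{|1-z\omega(z)|^2}.
\]
The new feature is that $\omega(0)$ need not vanish. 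Schwarz's lemma is therefore no longer available, and I would use Lemma B in its place.

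First, the Schwarz--Pick inequality $|\omega'(z)|\le(1-|\omega(z)|^2)/(1-|z|^2)$ controls the derivative term. Next, writing $|\omega(z)|=s$, I would use $|1-z\omega(z)|\ge 1-|z|s$ to bound the factor $(1-|z|^2)^2/|1-z\omega(z)|^2$. As in the corollary, I would pass to the quantity $s^2/(1-s^2)$ and rewrite the Möbius factor through the auxiliary map $\psi$ and the hyperbolic identity stated in the proof of Corollary \ref{Cor-Sf-normalized}. Lemma B then gives
\[
\frac{s^2}{1-s^2}\le\frac{(\gamma+|z|)^2}{(1-\gamma^2)(1-|z|^2)}.
\]
After these substitutions the right-hand side becomes an explicit function $\Phi(r,\gamma)$ of $r=|z|$ and $\gamma$. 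I would show that $\Phi$ is increasing in $r$, take the limit $r\to1^-$, and use $(1+\gamma)^2/(1-\gamma^2)=(1+\gamma)/(1-\gamma)$ to simplify. The aim is to bring the bound to the form $2\lambda(1+\gamma)[2-\lambda(1+\gamma)]$. The term $-\lambda^2(1+\gamma)^2$ should come from the interplay of the coefficient $(1-\lambda)$ with $\omega^2$, once the bound on $|\omega|^2$ from Lemma B is inserted, and from optimizing over $s\in[0,1]$ the expression $(1-s^2)+|1-\lambda|s^2$.

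The step I expect to be the main obstacle is exactly this optimization: showing that the maximum over the admissible $s$ and $r$ collapses to the claimed closed form. The quantity to maximize is quadratic in $t=\lambda(1+\gamma)$, namely $2t(2-t)$, which suggests completing a square. The restriction $0<\lambda\le\pi/2$ will be needed to control the sign of $1-\lambda$. I would first treat the case $\lambda\le1$, where $|1-\lambda|=1-\lambda$, and then check separately that the bound still dominates when $1<\lambda\le\pi/2$.

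For sharpness I would take $\omega$ to be the disk automorphism $\omega(z)=(z+\gamma)/(1+\gamma z)$ with $\gamma\in[0,1)$, which satisfies $\omega(0)=\gamma$. The corresponding $f$ is
\[
f(z)=\int_0^z\exp\Bigl(\int_0^t\frac{e^{\lambda\xi\omega(\xi)}-1}{\xi}\,d\xi\Bigr)dt.
\]
I would evaluate $(1-r^2)^2|S_f(r)|$ along $z=r\to1^-$ using \eqref{Eq-Sf-explicit}, as was done for $f_0$ in Corollary \ref{Cor-Sf-normalized}. This case should reduce to Corollary \ref{Cor-Sf-normalized} when $\gamma=0$, and that reduction serves as a consistency check on the constants. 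If the check shows a mismatch, I would adjust the claimed extremal function by a rotation $\omega(z)\mapsto e^{i\theta}\omega(e^{-i\theta}z)$ to align the phases.
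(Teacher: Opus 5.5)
\textbf{There is a genuine gap, and it cannot be closed: the inequality is false on part of the stated range.} Your outline follows the paper's route: the pointwise estimate of Proposition~\ref{Prop-Sf-formula}, the $\psi$-identity, Schwarz--Pick and Lemma B, a supremum as $|z|\to1^-$, and the M\"obius extremal evaluated radially. The step you flag as the main obstacle cannot be carried out.

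Comparing coefficients in $1+zf''/f'=e^{\lambda z\omega(z)}$ gives $2a_2=\lambda\omega(0)$ and $S_f(0)=6(a_3-a_2^2)=\lambda\omega'(0)$. For your own extremal $\omega(z)=(z+\gamma)/(1+\gamma z)$ this gives $\|S_f\|\ge|S_f(0)|=\lambda(1-\gamma^2)$. That exceeds $2\lambda(1+\gamma)[2-\lambda(1+\gamma)]$ exactly when $2\lambda(1+\gamma)>3+\gamma$.
\begin{itemize}
\item At $\lambda=\pi/2$ this holds for every $\gamma\in[0,1)$.
\item For $\gamma=0$ this is the function $f_0$ of Corollary~\ref{Cor-Sf-normalized}, with $S_{f_0}(0)=\lambda>2\lambda(2-\lambda)$ as soon as $\lambda>3/2$.
\item Once $\lambda(1+\gamma)>2$, the claimed bound is negative.
\end{itemize}
So your separate check for $1<\lambda\le\pi/2$ must fail.

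The underlying problem is the starting estimate. Its right-hand side is the Schwarzian of $g$ with $1+zg''/g'=(1+(2\lambda-1)z\omega)/(1-z\omega)$, a convex function of order $1-\lambda$, not of a function in $\mathcal C_{e^\lambda}$. With $p=z\omega$, the explicit formula reduces to
\[
S_f(z)=\frac{\lambda p'(z)e^{\lambda p(z)}}{z}-\frac{e^{2\lambda p(z)}-1}{2z^2}.
\]
For $\lambda=1$ and $\omega\equiv c\ne0$, the right-hand side of the pointwise estimate vanishes identically, while $S_f(z)=-c^3z/6+O(z^2)$. Even granting that estimate, the paper reaches the closed form only by two further moves. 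It uses $1-|\psi(z)|^2\le1-\gamma^2$, which is false: it fails wherever $\omega(z)=\bar z$, e.g. at $z=r$ for $\omega(z)=\gamma+(1-\gamma/r)z$ with $0<\gamma<r<1$. It also treats $1-\lambda$ as nonnegative. Your substitute via $|1-z\omega|\ge1-|z|s$ cannot produce the factor $1-\gamma^2$ either. After Schwarz--Pick the derivative term becomes $(1-s^2)(1-r^2)/(1-rs)^2$, which equals $1$ at the admissible value $s=r$.

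\textbf{The sharpness step fails for a structural reason.} Since $|e^{\lambda p}|\le e^{\lambda}$ and $|p'(z)|\le1/(1-|z|^2)$, the formula above gives, for \emph{every} $f\in\mathcal C_{e^\lambda}$,
\[
(1-|z|^2)^2|S_f(z)|\le\lambda e^{\lambda}\,\frac{1-|z|^2}{|z|}+\frac{(e^{2\lambda}+1)(1-|z|^2)^2}{2|z|^2}\longrightarrow0\qquad(|z|\to1^-).
\]
So evaluating $(1-r^2)^2|S_f(r)|$ as $r\to1^-$ from \eqref{Eq-Sf-explicit} yields $0$, and $\|S_f\|$ is always attained at an interior point.

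Your $\gamma=0$ consistency check would expose the mismatch between $2\lambda(2-\lambda)$ and the $2\lambda(1+\lambda)$ of Corollary~\ref{Cor-Sf-normalized}. Both are claimed as the sharp value of the same supremum over $\{f''(0)=0\}$, so no change of extremal function can reconcile them; a rotation of $f$ does not even change $\|S_f\|$.

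Expanding in $\lambda$ shows the bound is also not sharp where it does hold. The leading term of $S_f$ is $\lambda\omega'(z)$ and $(1-|z|^2)^2|\omega'(z)|\le1$, so $\|S_f\|\le\lambda+O(\lambda^2)$ uniformly on the class. That is far below the claimed $4\lambda(1+\gamma)+O(\lambda^2)$.

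No completion of your plan can prove the statement as written. A correct result would have to start from the exact formula and locate an interior maximum.
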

\begin{proof}[\bf Proof of Theorem \ref{Th-Sf-general}]
	Let $\gamma = |\omega(0)| = {|f''(0)|}/{\lambda} < 1$. To establish the general upper bound \eqref{Eq-Th-Sf-general}, recall from Proposition \ref{Prop-Sf-formula} that for any $f \in \mathcal C_{e^\lambda}$, the Schwarzian derivative satisfies the pointwise estimate
	\begin{equation}\label{Eq-Sf-proof-step1}
		(1 - |z|^2)^2 |S_f(z)| \le 2\lambda (1 - |\psi(z)|^2) \left| \frac{\omega'(z)(1 - |z|^2)}{1 - |\omega(z)|^2} + (1 - \lambda)\frac{|\omega(z)|^2(1 - |z|^2)}{1 - |\omega(z)|^2} \right|,
	\end{equation}
	where \begin{align*}
		\psi(z) = \frac{\overline{z} - \omega(z)}{1 - z\omega(z)}.
	\end{align*} By the Schwarz--Pick lemma for general analytic self-maps of the unit disk with $\omega(0) = \gamma e^{i\theta}$, we have the standard sharp bounds
	\begin{equation}\label{Eq-Schwarz-Pick-bounds}
		\frac{|\omega'(z)|(1 - |z|^2)}{1 - |\omega(z)|^2} \le 1 \quad \text{and} \quad \frac{|\omega(z)|^2(1 - |z|^2)}{1 - |\omega(z)|^2} \le \frac{(\gamma + |z|)^2}{1 - \gamma^2}.
	\end{equation}
	Applying the triangle inequality to \eqref{Eq-Sf-proof-step1} and substituting the bounds in \eqref{Eq-Schwarz-Pick-bounds}, together with $1 - |\psi(z)|^2 \le 1 - \gamma^2$, we obtain
	\begin{align}\label{Eqqq-2.15}
		(1 - |z|^2)^2 |S_f(z)| &\le 2\lambda (1 - \gamma^2) \left( 1 + (1 - \lambda)\frac{(\gamma + |z|)^2}{1 - \gamma^2} \right) \\
		&= 2\lambda \left( 1 - \gamma^2 + (1 - \lambda)(\gamma + |z|)^2 \right).
	\end{align}
	Since $|z| < 1$, taking the supremum in \eqref{Eqqq-2.15} over $z \in \mathbb{D}$, we obtain
	\begin{align*}
		\sup_{z \in \mathbb{D}} (1 - |z|^2)^2 |S_f(z)| &\le 2\lambda \left( 1 - \gamma^2 + (1 - \lambda)(1 + \gamma)^2 \right) \\
		&= 2\lambda (1 + \gamma) \left( (1 - \gamma) + (1 - \lambda)(1 + \gamma) \right) \\
		&= 2\lambda (1 + \gamma) \left( 2 - \lambda(1 + \gamma) \right),
	\end{align*}
	which establishes the required inequality \eqref{Eq-Th-Sf-general} for all $z \in \mathbb{D}$.\vspace{1.2mm}
	
	To show that the upper bound is sharp, consider the extremal function $f_\gamma \in \mathcal C_{e^\lambda}$ generated by the Möbius Schwarz function $\omega(z) = {(\gamma + z)}/{(1 + \gamma z)}$. The function $f_\gamma$ satisfies
	\begin{align}\label{Eqq-2.13}
		1 + \frac{z f_\gamma''(z)}{f_\gamma'(z)} = e^{\lambda z \omega(z)} = \exp\left( \lambda z \frac{\gamma + z}{1 + \gamma z} \right).
	\end{align}
	Taking the logarithm and integrating \eqref{Eqq-2.13}, we obtain 
	\begin{align}\label{Eqq-2.14}
		\log f_\gamma'(z) = \int_0^z \frac{\exp\left( \lambda \xi \frac{\gamma + \xi}{1 + \gamma \xi} \right) - 1}{\xi} \, d\xi.
	\end{align}
	Thus, exponentiating \eqref{Eqq-2.14} gives
	\begin{align*}
		f_\gamma'(z) = \exp\left( \int_0^z \frac{\exp\left( \lambda \xi \frac{\gamma + \xi}{1 + \gamma \xi} \right) - 1}{\xi} \, d\xi \right).
	\end{align*}
	Integrating $f_\gamma'(t)$ from $0$ to $z$ with $f_\gamma(0) = 0$, we find $f_\gamma$ explicitly as
	\begin{align*}
		f_\gamma(z) = \int_0^z \exp\left( \int_0^t \frac{\exp\left( \lambda \xi \frac{\gamma + \xi}{1 + \gamma \xi} \right) - 1}{\xi} \, d\xi \right) dt.
	\end{align*}
	From Proposition \ref{Prop-Sf-formula}, the pointwise evaluation along the real axis segment $r \in (0, 1)$ is given by 
	\begin{align*}
		|S_{f_\gamma}(r)| = 2\lambda \left|\omega'(r) + (1 - \lambda)\omega(r)^2\right| \frac{1}{|1 - r\omega(r)|^2}.
	\end{align*}
	Multiplying both sides by $(1 - r^2)^2$ yields
	\begin{align}\label{Eqq-2.15}
		(1 - r^2)^2 |S_{f_\gamma}(r)| = 2\lambda \left| \omega'(r) + (1 - \lambda)\omega(r)^2 \right| \frac{(1 - r^2)^2}{(1 - r\omega(r))^2}.
	\end{align}
	Recall the auxiliary mapping defined on $r \in (0, 1)$ by 
	\begin{align}\label{Eqq-2.16}
		\psi(r) = \frac{r - \omega(r)}{1 - r\omega(r)}.
	\end{align}
	Substituting $\omega(r) = {(\gamma + r)}/{(1 + \gamma r)}$ into \eqref{Eqq-2.16}, we find $\psi(r) = -\gamma$, which implies $1 - |\psi(r)|^2 = 1 - \gamma^2$. Furthermore, we have
	\begin{align}\label{Eqq-2.17}
		\frac{(1 - r^2)^2}{(1 - r\omega(r))^2} = \frac{(1 - |\psi(r)|^2)(1 - r^2)}{1 - \omega(r)^2}.
	\end{align}
	Combining \eqref{Eqq-2.15} and \eqref{Eqq-2.17}, we obtain 
	\begin{align}\label{Eqq-2.18}
		(1 - r^2)^2 |S_{f_\gamma}(r)| = 2\lambda \left( 1 - \gamma^2 \right) \left( \frac{\omega'(r)}{1 - \omega(r)^2} + (1 - \lambda) \frac{\omega(r)^2}{1 - \omega(r)^2} \right) (1 - r^2).
	\end{align}
	Using the identities
	\begin{align*}
		\frac{\omega'(r)}{1 - \omega(r)^2} = \frac{1}{1 - r^2} \quad \text{and} \quad \frac{\omega(r)^2 (1 - r^2)}{1 - \omega(r)^2} = \frac{(\gamma + r)^2}{1 - \gamma^2},
	\end{align*}
	equation \eqref{Eqq-2.18} simplifies to
	\begin{align*}
		(1 - r^2)^2 |S_{f_\gamma}(r)| &= 2\lambda \left( 1 - \gamma^2 \right) \left( 1 + (1 - \lambda) \frac{(\gamma + r)^2}{1 - \gamma^2} \right) \\
		&= 2\lambda \left(1 - \gamma^2 + (1 - \lambda)(\gamma + r)^2 \right).
	\end{align*}
	Taking the radial limit $r \to 1^-$, we deduce that
	\begin{align*}
		\lim_{r \to 1^-} (1 - r^2)^2 |S_{f_\gamma}(r)| &= 2\lambda \left( 1 - \gamma^2 + (1 - \lambda)(1 + \gamma)^2 \right) \\
		&= 2\lambda (1 + \gamma) \left( 2 - \lambda(1 + \gamma) \right).
	\end{align*}
	This confirms that the bound \eqref{Eq-Th-Sf-general} is sharp, completing the proof.
\end{proof}
\noindent\textbf{Compliance of Ethical Standards:}\\

\noindent\textbf{Conflict of interest:} The authors declare that there is no conflict  of interest regarding the publication of this paper.\vspace{1.2mm}

\noindent\textbf{Data availability statement:}  Data sharing not applicable to this article as no datasets were generated or analysed during the current study.\vspace{1.2mm}

\noindent {\bf Authors' contributions:} All authors have equal contributions in preparation of the manuscript.


\begin{thebibliography}{99}
 	
 	
 	\bibitem{Ahamed-Allu-Hossain-MM-2025}
 	{\sc M. B. Ahamed}, {\sc V. Allu} and {\sc R. Hossain}, Pre-Schwarzian and Schwarzian norm estimates for certain classes of analytic function, \textit{Monatsh. Math.} \textbf{209}, 185--213 (2026).
 	
 	\bibitem{Becker-JRAM-1983}{\sc J. Becker},: Löwnersche Differentialgleichung und quasikonform fortsetzbare schlichte Funktionen, \textit{ J.	Reine Angew. Math.}, \textbf{255}, 21-43 (1972).
 	
 	
 	
 	
 	
 	
 	
 	\bibitem{Carrasco-Hernandez-AMP-2023} 
 	{\sc P. Carrasco} and {\sc R. Hern\'andez}, Schwarzian derivative for convex mappings of order $\alpha$, \textit{Anal. Math. Phys.} \textbf{13}(2), Art. 22 (2023). https://doi.org/10.1007/s13324-023-00785-y.
 	
 	
 
 	\bibitem{Duren-1983} 
 	{\sc P. L. Duren}, \textit{Univalent functions}, Grundlehren der mathematischen Wissenschaften, vol. 259, Springer-Verlag, New York, Berlin, Heidelberg, Tokyo, 1983.
 	
 	
 	\bibitem{Goodman-1983} 	{\sc A. W. Goodman}, \textit{Univalent Functions}, vols. I and II, Mariner Publishing Co., Tampa, Florida, 1983.
 	
 	\bibitem{Janowski-1973} 
 	{\sc W. Janowski}, Extremal problems for a family of functions with positive real part and for some related families, \textit{Ann. Polon. Math.} \textbf{23}, 159--177 (1973).
 	
 	\bibitem{Kanas-2006} {\sc S. Kanas}, Differential subordination related to conic sections, \textit{J. Math. Anal. Appl.} \textbf{317}(2), 650--658 (2006).
 	
 	
 	
 	\bibitem{Kim-Sugawa-2002}  	{\sc Y. C. Kim} and {\sc T. Sugawa}, Growth and coefficient estimates for uniformly locally univalent functions on the unit disk, \textit{Rocky Mountain J. Math.} \textbf{32}(1), 179--200 (2002).
 	
 	\bibitem{Kim-etal-2004} 
 	{\sc Y. C. Kim}, {\sc S. Ponnusamy} and {\sc T. Sugawa}, Mapping properties of nonlinear integral operators and pre-Schwarzian derivatives, \textit{J. Math. Anal. Appl.} \textbf{299}(2), 433--447 (2004).
 	
 	
 	\bibitem{Kumar-Ravichandran-2018} 
 	{\sc S. Kumar} and {\sc V. Ravichandran}, Subordinations for functions with positive real part, \textit{Complex Anal. Oper. Theory} \textbf{12}(5), 1179--1191 (2018).
 	
 	
 	\bibitem{Ma-Minda-1992} 
 	{\sc W. Ma} and {\sc D. A. Minda}, Unified treatment of some special classes of univalent functions, in: \textit{Proceedings of the Conference on Complex Analysis, Tianjin}, Conf. Proc. Lecture Notes Anal., vol. I, Int. Press, Cambridge, MA, 1992, pp. 157--169.
 	
 	\bibitem{Mendiratta-etal-2014} 
 	{\sc R. Mendiratta}, {\sc S. Nagpal} and {\sc V. Ravichandran}, On a subclass of strongly starlike functions associated with exponential function, \textit{Bull. Malays. Math. Sci. Soc.} \textbf{38}(1), 365--386 (2014).
 	
 	\bibitem{Okuyama-2000} 
 	{\sc Y. Okuyama}, The norm estimates of pre-Schwarzian derivatives of spiral-like functions, \textit{Complex Var. Theory Appl.} \textbf{42}(3), 225--239 (2000).
 
 	
 	\bibitem{Ponnusamy-Sahoo-2008} 
 	{\sc S. Ponnusamy} and {\sc S. K. Sahoo}, Norm estimates for convolution transforms of certain classes of analytic functions, \textit{J. Math. Anal. Appl.} \textbf{342}(1), 171--180 (2008).
 	

 	
 	\bibitem{Raina-Sokol-2015} 
 	{\sc R. K. Raina} and {\sc J. Sokół}, Some properties related to a certain class of starlike functions, \textit{C. R. Math. Acad. Sci. Paris} \textbf{353}(11), 973--978 (2015).
 	
 	\bibitem{Ronning-1993} 
 	{\sc F. Rønning}, Uniformly convex functions and a corresponding class of starlike functions, \textit{Proc. Amer. Math. Soc.} \textbf{118}(1), 189--196 (1993).
 	
 	\bibitem{Shi-etal-2020} 
 	{\sc L. Shi}, {\sc Z. G. Wang}, {\sc R. L. Su} and {\sc M. Arif}, Initial successive coefficients for certain classes of univalent functions involving the exponential function, \textit{J. Math. Inequal.} \textbf{14}(2), 1183--1201 (2020).
 	
 	\bibitem{Sokol-Stankiewicz-1996} 
 	{\sc J. Sokół} and {\sc J. Stankiewicz}, Radius of convexity of some subclasses of strongly starlike functions, \textit{Zeszyty Nauk. Politech. Rzeszowskiej Mat.} \textbf{19}, 101--105 (1996).
 	
 	\bibitem{Sugawa-1998} 
 	{\sc T. Sugawa}, On the norm of pre-Schwarzian derivatives of strongly starlike functions, \textit{Ann. Univ. Mariae Curie-Sklodowska Sect. A} \textbf{52}(2), 149--157 (1998).
 	
 	\bibitem{Yamashita-1976} 
 	{\sc S. Yamashita}, Almost locally univalent functions, \textit{Monatsh. Math.} \textbf{81}(3), 235--240 (1976).
 	
 	\bibitem{Yamashita-HMJ-1999}  {\sc S. Yamashita},: Norm estimates for function starlike or convex of order alpha, \textit{ Hokkaido Math. J.} \textbf{28}, 217-230 (1999).
 	
 \end{thebibliography}
\end{document}